\newtheorem{theorem}{Theorem}[section]
\newtheorem{lemma}[theorem]{Lemma}
\newtheorem{definition}[theorem]{Definition}
\theoremstyle{remark}
\newtheorem*{problem}{Problem}
\newcommand{\R}{\mathbb{R}}
\newcommand{\pdev}[2]{\frac{\partial}{\partial #1} (#2)}
\newcommand{\spdev}[2]{\frac{\partial #2}{\partial #1}}
\DeclareMathOperator{\disc}{disc}
\DeclareMathOperator{\Hess}{Hess}
\DeclareMathOperator{\Real}{Re}
\DeclareMathOperator{\Imag}{Im}
\DeclareMathOperator{\Ker}{Ker}
\newcommand{\rvec}{{\mathbf{r}}}
\newcommand{\pvec}{{\mathbf{p}}}
\newcommand{\vr}{v}
\newcommand{\Diff}{\mathrm{D}}
\title[Locating conical degeneracies in the spectrum]{Locating conical
  degeneracies in the
  spectra of parametric self-adjoint matrices}
\author{G.~Berkolaiko} 
\address{Department of Mathematics,
  Texas A\&M University, College Station, TX 77843-3368, USA}
\author{A.~Parulekar}
\address{Department of Electrical and Computer Engineering,
  University of Texas at Austin
  Austin, TX 78712, USA}
\begin{document}

\begin{abstract}
  A simple iterative scheme is proposed for locating the parameter
  values for which a 2-parameter family of real symmetric matrices has
  a double eigenvalue.  The convergence is proved to be quadratic.  An
  extension of the scheme to complex Hermitian matrices (with 3
  parameters) and to location of triple eigenvalues (5 parameters for
  real symmetric matrices) is also described.  Algorithm convergence
  is illustrated in several examples: a real symmetric family, a
  complex Hermitian family, a family of matrices with an ``avoided
  crossing'' (no convergence) and a 5-parameter family of real
  symmetric matrices with a triple eigenvalue.
\end{abstract}

\maketitle

\section{Introduction}

A theorem of von Neumann and Wigner states that, generically, a
two-parameter family of real symmetric matrices has multiple
eigenvalues at isolated points \cite{WigVon}. In other words, the
matrices with multiple eigenvalues have co-dimension 2 in the manifold
of real symmetric matrices \cite[Appendix 10]{Arnold_mechanics}.  In
this paper, we would like to address the problem of locating these
isolated points of eigenvalue multiplicity in the 2-dimensional
parameter space.  To be more precise, we consider the following
problem.
\begin{problem}
  Given a smooth real symmetric matrix valued function
  $A: \mathbb{R}^2 \mapsto \mathbb{R}^{n \times n}$, locate the
  values of the parameters $(x,y)$ which yield a matrix $A(x,y)$ with
  degenerate eigenvalues.
\end{problem}

To give a simple example, the function
\begin{equation*}
  A(x,y) = 
  \begin{pmatrix}
    x & y \\
    y & -x
  \end{pmatrix}
\end{equation*}
has a double eigenvalue at the unique point $(x,y)=(0,0)$.  Its
eigenvalues $\lambda$ satisfy the equation $\lambda^2 = x^2+y^2$ and
the eigenvalue surface is a circular double cone in the space
$(x,y,\lambda)$.  In contrast, the nonlinear function
\begin{equation}
  \label{eq:example_func}
  A(x,y) = 
  \begin{pmatrix} 
    \cos(y)\sin(x) & 2-3\sin(y-x) \\
    2-3\sin(y-x) & 2\cos(y)-\sin(x) 
  \end{pmatrix}  
\end{equation}
has multiple points of eigenvalue multiplicity, see
Figure~\ref{fig:simple_cone}.  Each point is isolated and locally
around each point the eigenvalue surface also looks like a cone.

For a family of complex Hermitian matrices, the co-dimension of the
matrices with multiple eigenvalues is 3.  Therefore, the analogous
question can be posed about locating multiple eigenvalues of a
Hermitian $A(x,y,z)$.  We will formulate an extension of our results
to complex Hermitian matrices but will concentrate on the real
symmetric case in our proofs.

\begin{figure}
  \centering
  \includegraphics[scale=0.5]{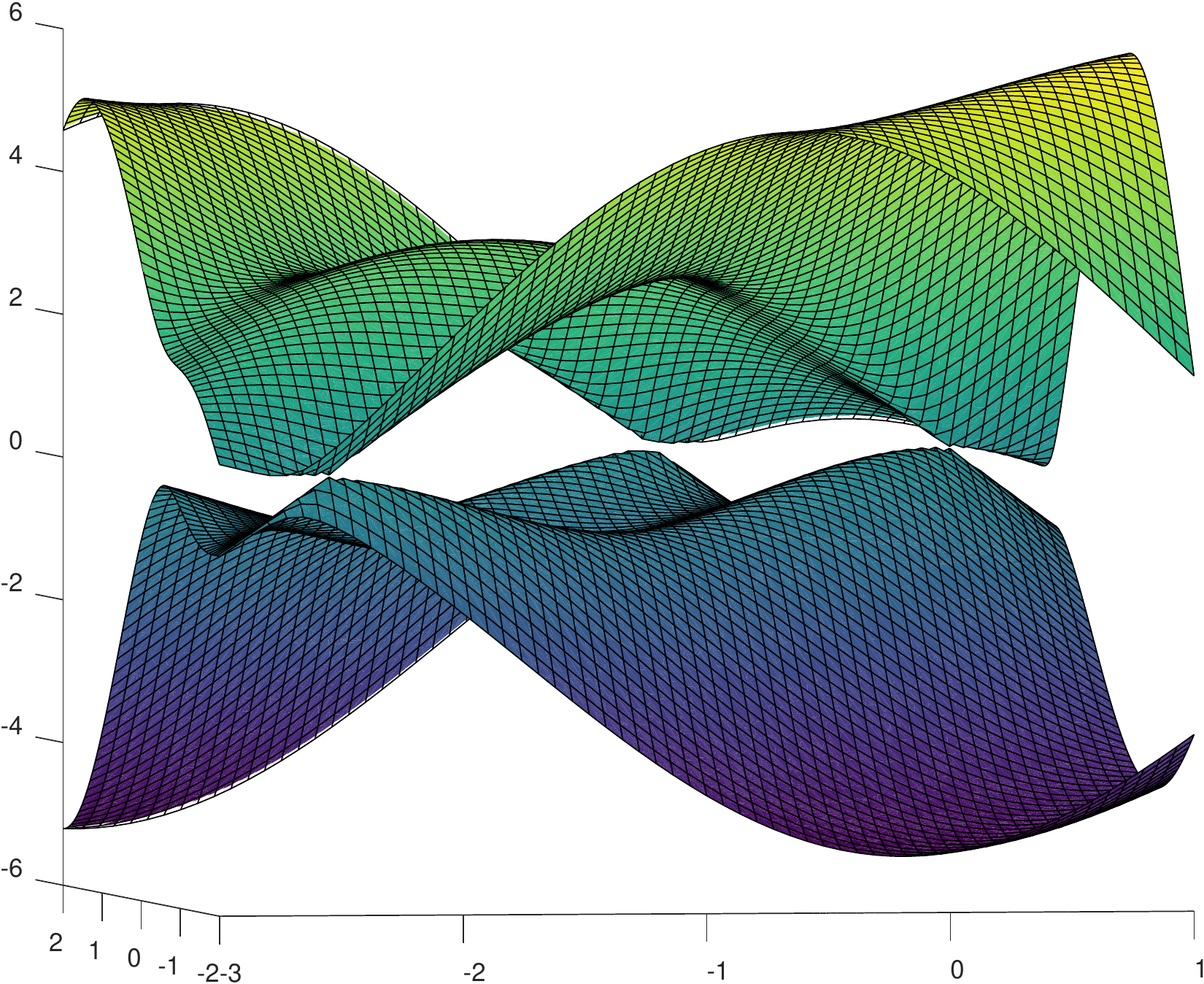}
  \caption{Eigenvalue surfaces corresponding to $A(x,y)$ from
    \eqref{eq:example_func}.  There are three conical point; the
    surfaces appear not to touch at the middle point due to
    insufficient grid precision.}
  \label{fig:simple_cone}
\end{figure}

The problem of locating the points of eigenvalue multiplicity is of
practical importance.  In condensed matter physics
\cite{AshcroftMermin_solid} the wave propagation through periodic
medium is studied via Floquet--Bloch transform
\cite{Kuc_floquet,Kuc_bams16} which results in a parametric family of
self-adjoint operators (or matrices) with discrete spectrum.  The
eigenvalue surfaces (sheets of the ``dispersion relation'') may touch,
see Fig.~\ref{fig:simple_cone}, which has profound effect on wave
propagation and its sensitivity to a small perturbation of the medium.
This touching corresponds precisely to a multiplicity in the
eigenvalue spectrum.  To give a well-studied example, the unusual
electron properties of graphene occur due to the presence of
eigenvalue multiplicity \cite{CasGraphen_rmp09,Nov_nob10}.  It is also
of practical relevance to be able to distinguish touching from
``almost touching'' (also known as ``avoided crossing'' in
one-parameter problems).

The question of locating eigenvalue multiplicity in a family of
$2\times 2$ real symmetric matrices $A$ has a straightforward solution
(which also illustrates why the co-dimension is 2).  The discriminant
of $A \in \mathbb{R}^{2\times 2}$ can be written as a sum of two
squares,
\begin{equation}
  \label{eq:discriminant_def}
  \disc(A):=(\lambda_1-\lambda_2)^2=(A_{11}-A_{22})^2+4A_{12}^2.  
\end{equation}
By definition, the discriminant is $0$ if and only if two eigenvalues
coincide, therefore we have two conditions that must simultaneously be
met for the multiplicity to occur:
\begin{equation}
  \label{eq:F_def}
  F(x,y) = \textbf{0}, 
  \qquad
  \mbox{where}\quad
  F : \R^2 \to \R^2, \quad
  \ F(x,y) :=
  \begin{pmatrix}
    A_{11}(x,y)-A_{22}(x,y)\\
    A_{12}(x,y)
  \end{pmatrix}.
\end{equation}
Unfortunately, for larger matrices the discriminant quickly becomes
unwieldy and cannot be used in practical computations.  The
discriminant can still be written as a sum of squares
\cite{Ily,Lax,Par_laa02,DanIkr}, but the number of terms grows fast
with the size of the matrix.

Thus, for an $n\times n$ real symmetric matrix $A(x,y)$ depending on
two parameters $x$ and $y$ there is only one easily computable
function $\lambda_2(x,y) - \lambda_1(x,y)$ whose root, in variables
$x$ and $y$, we are seeking.\footnote{Here, without loss of
  generality, we have assumed that one is interested in the degeneracy
  $\lambda_1 = \lambda_2 < \lambda_3 < \ldots$} However, to apply a
standard method with quadratic convergence, such as the
Newton--Raphson algorithm, one needs 2 functions for 2 variables.  One
can search for the minimum of the square eigenvalue difference,
$\big(\lambda_2(x,y) - \lambda_1(x,y)\big)^2$, which is smooth.  But
such a search would converge equally well to a point of ``avoided
crossing'', a pitfall our proposed method manages to avoid, see
Sections~\ref{sec:avoided} and \ref{sec:mergingDirac}.

One can change the basis to make $A(x,y)$ block-diagonal, with a
$2\times2$ block corresponding to eigenvalues $\lambda_1$ and
$\lambda_2$.  The existence of this change in a neighborhood of the
multiplicity point is assured (using Riesz projector) if
$\lambda_{1,2}$ remain bounded away from the rest of the spectrum.
However the new basis will depend on the parameters $(x,y)$ and is not
directly accessible for numerical computations.  Despite this
obstacle, we will show that a ``naive'' approach produces equivalently
good convergence: one can use a \emph{constant} eigenvector basis
which is recomputed\footnote{We are motivated mostly by the
  applications to tight-binding models of condensed matter physics
  \cite{AshcroftMermin_solid} where the matrix dimesion $n$ is often
  of order $10$ and computation of eigenvectors is relatively fast and
  precise.  Another area of application is pointed out at the end of
  Section~\ref{sec:mergingDirac}.} at each point of the
Newton--Raphson iteration.  More precisely, we establish the following
theorem.

\begin{theorem}
  \label{thm:main}
  Let $A(\rvec):\mathbb{R}^2\mapsto \mathbb{R}^{n\times n}$
  be a real symmetric matrix valued function which is continuously
  twice differentiable in each entry, with a non-degenerate conical
  point (defined below) between $\lambda_1$ and $\lambda_2$ at
  parameter point $\alpha$. For any $\rvec_i$, define
  $\rvec_{i+1}$ by
  \begin{equation}
    \label{iter_equation_simple}
    \rvec_{i+1} = \rvec_i -
    \begin{pmatrix} 
      \big\langle \vr_1, \spdev{x}{A}\vr_1\big\rangle
      -\big\langle \vr_2, \spdev{x}{A}\vr_2\big\rangle 
      & \big\langle \vr_1, \spdev{y}{A}\vr_1\big\rangle
      -\big\langle \vr_2, \spdev{y}{A}\vr_2\big\rangle \\
      2\big\langle \vr_1, \spdev{x}{A}\vr_2\big\rangle 
      & 2\big\langle \vr_1, \spdev{y}{A}\vr_2\big\rangle
    \end{pmatrix}^{-1}
    \begin{pmatrix}
      \lambda_1 - \lambda_2 \\ 0
    \end{pmatrix}
  \end{equation}
  where 
  $\lambda_{1,2}=\lambda_{1,2}(\rvec_i)$ denote the eigenvalues of $A$
  at the point $\rvec_i$ and $\vr_{1,2}=\vr_{1,2}(\rvec_i)$ denote the
  corresponding eigenvectors.
  
  Then there exists an open neighborhood $\Omega\subset \mathbb{R}^2$ of
  $\alpha$ and a constant $C>0$ such that for all $\rvec_i\in \Omega$, the
  corresponding $\rvec_{i+1}$ satisfies the estimate
  \begin{equation}
    \label{eq:quadratic_conv}
    |\rvec_{i+1}-\alpha|<C|\rvec_i-\alpha|^2.
  \end{equation}
\end{theorem}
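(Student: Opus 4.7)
My approach is to recognize the proposed iteration as Newton's method applied to a smooth auxiliary function $\tilde F:\R^2\to\R^2$ whose zero at $\alpha$ encodes the eigenvalue collision, and then invoke the standard quadratic convergence result. The delicate point is that the eigenvectors $\vr_{1,2}(\rvec_i)$ appearing in \eqref{iter_equation_simple} are \emph{not} continuous as $\rvec_i\to\alpha$, so one cannot apply Newton--Kantorovich directly to the functions produced by the algorithm.

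First, I would construct a smooth reference frame near $\alpha$ via the Riesz projector. Non-degeneracy of the conical point keeps the pair $\{\lambda_1,\lambda_2\}$ isolated from the rest of the spectrum throughout a neighborhood of $\alpha$, so the spectral projector $P(\rvec)$ onto the corresponding 2-dimensional invariant subspace is $C^2$, and a smooth orthonormal basis $\{w_1(\rvec),w_2(\rvec)\}$ of $P(\rvec)\R^n$ exists locally. Writing $W(\rvec):=[w_1(\rvec)\mid w_2(\rvec)]$, the compressed matrix $\tilde B(\rvec):=W(\rvec)^T A(\rvec)W(\rvec)$ is $C^2$ and symmetric, with eigenvalues $\lambda_{1,2}(\rvec)$. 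Define
\begin{equation*}
  \tilde F(\rvec):=\bigl(\tilde B_{11}(\rvec)-\tilde B_{22}(\rvec),\ 2\tilde B_{12}(\rvec)\bigr)^T.
\end{equation*}
Then $\tilde F(\alpha)=0$ because $\tilde B(\alpha)$ is a scalar multiple of the identity, and the non-degeneracy hypothesis translates into invertibility of $D\tilde F(\alpha)$.

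Next I would exploit a rotation identity to rewrite the iteration. For $\rvec_i$ near $\alpha$ (where $\lambda_1\neq\lambda_2$), the eigenvectors $\vr_{1,2}(\rvec_i)$ form an orthonormal basis of the same 2-dimensional subspace as $\{w_1(\rvec_i),w_2(\rvec_i)\}$, so $V(\rvec_i)=W(\rvec_i)C^T$ for some $C=R_\theta\in SO(2)$. A short direct computation (cleanest in the $\sigma_z,\sigma_x$ basis of the traceless symmetric part) shows that the map $\Phi(S):=(S_{11}-S_{22},\ 2S_{12})$ on symmetric $2\times 2$ matrices intertwines conjugation with doubled-angle rotation, $\Phi(CSC^T)=R_{2\theta}\,\Phi(S)$. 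Applying this to the frozen-basis matrix $\hat B(\rvec):=W(\rvec_i)^T A(\rvec)W(\rvec_i)$ identifies the right-hand vector in \eqref{iter_equation_simple} with $R_{2\theta}\hat F(\rvec_i)$ and the matrix being inverted with $R_{2\theta}\,D\hat F(\rvec_i)$, where $\hat F:=\Phi\circ\hat B$. Since $R_{2\theta}$ is independent of $\rvec$, it cancels, and the iteration reduces to exact Newton's method for $\hat F$: $\rvec_{i+1}=\rvec_i-D\hat F(\rvec_i)^{-1}\hat F(\rvec_i)$.

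Finally I would compare $\hat F$ with the smooth $\tilde F$. By construction $\hat F(\rvec_i)=\tilde F(\rvec_i)$. Differentiating $\tilde B=W^T AW$ and using antisymmetry of $X_j:=(\partial_j W)^T W$ gives $\partial_j\tilde B(\rvec_i)=\partial_j\hat B(\rvec_i)+[X_j,\tilde B(\rvec_i)]$. Since $\tilde B$ is smooth and $\tilde B(\alpha)$ is scalar, $\tilde B(\rvec_i)-\lambda(\alpha)I=O(|\rvec_i-\alpha|)$, so the commutator correction and hence $D\hat F(\rvec_i)-D\tilde F(\rvec_i)$ are $O(|\rvec_i-\alpha|)$. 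Combined with $\tilde F(\rvec_i)=O(|\rvec_i-\alpha|)$, inversion yields
\begin{equation*}
  D\hat F(\rvec_i)^{-1}\hat F(\rvec_i)=D\tilde F(\rvec_i)^{-1}\tilde F(\rvec_i)+O(|\rvec_i-\alpha|^2).
\end{equation*}
The first term on the right is the Newton step for the $C^2$ map $\tilde F$ at its simple zero $\alpha$, which by classical Newton theory satisfies $|\rvec_i-D\tilde F(\rvec_i)^{-1}\tilde F(\rvec_i)-\alpha|=O(|\rvec_i-\alpha|^2)$, giving \eqref{eq:quadratic_conv}. The main obstacle, the discontinuity of $\vr_{1,2}$ at $\alpha$, is neutralized by the rotation identity: it trades the singular $V(\rvec_i)$ for the smooth $W(\rvec_i)$ at the price of a constant rotation that disappears when the Jacobian is inverted.
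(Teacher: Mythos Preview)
Your proof is correct and follows essentially the same route as the paper: the smooth Riesz-projector frame $W(\rvec)$ produces the paper's $C^2$ block $B$, your rotation identity $\Phi(CSC^T)=R_{2\theta}\Phi(S)$ is the explicit $SO(2)$ instance of the paper's orthogonal-invariance lemma for $F$ and $J$, and your commutator estimate $\partial_j\tilde B-\partial_j\hat B=[X_j,\tilde B]=O(|\rvec_i-\alpha|)$ is the paper's Jacobian-difference lemma in slightly different packaging. The only cosmetic point to patch is that a priori $C\in O(2)$ rather than $SO(2)$, but the iteration \eqref{iter_equation_simple} is invariant under sign changes of $v_1,v_2$, so one may always arrange $\det C=1$.
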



Before we prove this theorem in Section~\ref{sec:proofs}, we explain in
Section~\ref{sec:discussion} the geometrical picture behind the
iterative procedure~(\ref{iter_equation_simple}) and also point out
the main differences between~(\ref{iter_equation_simple}) and the
Newton--Raphson method in a conventional setting.  We also review
related literature in Section~\ref{sec:berry_phase} once we introduce
relevant notions.  The precise definition and properties of
``nondegenerate conical point'' is given in
Section~\ref{sec:conical_points}.  Section~\ref{sec:examples} contains
some computational examples.

\subsection{Notation}

We let $C^2(\mathbb{R}^2, \mathbb{R}^{n\times n})$ denote the set of
matrix valued functions mapping $\mathbb{R}^2$ to
$\mathbb{R}^{n\times n}$ with each element being continuously twice
differentiable. The eigenvalues of the matrix function $A\in
C^2(\mathbb{R}^2, \mathbb{R}^{n\times n})$ are numbered in the
increasing order 
$\lambda_1 \leq \lambda_2 \leq \lambda_3 \leq \cdots \leq \lambda_n$
and without loss of generality we will look for
$\rvec=(x,y)\in\mathbb{R}^2$ such that $\lambda_1(\rvec) =
\lambda_2(\rvec)$.  Naturally, all results apply equally well to any
pair of consecutive eigenvalues.  We remark that functions
$\lambda_k(\rvec)$ are continuous but not necessarily smooth: the
points of eigenvalue multiplicity are typically the points where the
eigenvalues involved are not differentiable, see
Fig.~\ref{fig:simple_cone}. 

For any real symmetric matrix
valued function $A$ and any point $\pvec\in \mathbb{R}^2$, we let
$A^\pvec = V^* A(\rvec) V$ denote
the representation of $A$ in the eigenvector basis computed at point
$\pvec$.  That is, $V$ is a fixed orthogonal matrix whose columns
are the eigenvectors of $A(\pvec)$.  The eigenvectors are assumed to
be numbered according to the eigenvalue ordering.  This means that 
$A^\pvec\in C^2(\mathbb{R}^2, \mathbb{R}^{n\times n})$ is a
diagonal matrix at the point $\pvec$ but not necessarily anywhere else.

We let
\begin{equation}
  \label{eq:tildeA_def}
  \widetilde{A}^\pvec(\rvec) =
  \begin{pmatrix}
    A^\pvec_{11} & A^\pvec_{12}\\
    A^\pvec_{21} & A^\pvec_{22}
  \end{pmatrix}
  :=
  \begin{pmatrix}
    \langle v_1, A(\rvec) v_1\rangle
    &
    \langle v_1, A(\rvec) v_2\rangle \\
    \langle v_2, A(\rvec) v_1\rangle
    &
    \langle v_2, A(\rvec) v_2\rangle
  \end{pmatrix}
\end{equation}
denote the submatrix of $A^\pvec$ corresponding to the eigenvectors of
the coalescing eigenvalues.  We stress again that the eigenvectors
$v_1=v_1(\pvec)$ and $v_2=v_2(\pvec)$ are computed at the point $\pvec$
and do not vary with $\rvec$.  By the definition of $A^\pvec$, we
have
\begin{equation}
  \label{eq:tildeA_diag}
  \widetilde{A}^\pvec(\pvec) =
  \begin{pmatrix}
    \lambda_1(\pvec) & 0\\
    0 & \lambda_2(\pvec)
  \end{pmatrix}.
\end{equation}

We let
\begin{equation}
  \label{eq:target_function_def}
  F\big(A^\pvec(\rvec)\big) := 
  \begin{pmatrix}
    A^{\pvec}_{11}(\rvec)-A^{\pvec}_{22}(\rvec)\\
    2A^{\pvec}_{12}(\rvec)
  \end{pmatrix}  
\end{equation}
denote the target function similar to \eqref{eq:F_def}.  We stress
that $F$ is a function of $\rvec$.

Throughout the paper $\Diff$ will denote the row vector of derivatives
taken with respect to parameters $\rvec = (x,y)$,
\begin{equation*}
  \Diff f = \left( \frac{\partial f}{\partial x},
    \ \frac{\partial f}{\partial y}\right).
\end{equation*}
If $f$ is a vector-function, $\Diff f$ is a matrix with 2 columns.  We
use the notation $\Diff_{\rvec_0} f$ to denote the derivative
evaluated at the point $\rvec=\rvec_0$, i.e.\
\begin{equation*}
  \Diff_{\rvec_0} f = \left( \frac{\partial f}{\partial x}(\rvec_0),
    \ \frac{\partial f}{\partial y}(\rvec_0)\right).
\end{equation*}

We use notation $J_{\rvec}(A^{\pvec})$ to denote the Jacobian of
$F(A^\pvec)$,
\begin{equation}
  \label{eq:Jacobian_def}
  J_{\rvec}(A^{\pvec}) :=
  \Diff_{\rvec} F(A^\pvec) =
  \begin{pmatrix} 
      \big\langle \vr_1, \spdev{x}{A}\vr_1\big\rangle
      -\big\langle \vr_2, \spdev{x}{A}\vr_2\big\rangle 
      & \big\langle \vr_1, \spdev{y}{A}\vr_1\big\rangle
      -\big\langle \vr_2, \spdev{y}{A}\vr_2\big\rangle \\
      2\big\langle \vr_1, \spdev{x}{A}\vr_2\big\rangle 
      & 2\big\langle \vr_1, \spdev{y}{A}\vr_2\big\rangle
    \end{pmatrix},
\end{equation}
where $v_1,v_2$ are the eigenvectors of $A(\pvec)$ and the derivatives
$\spdev{x}{A}$ and $\spdev{y}{A}$ have been evaluated at point
$\rvec$.  This is the matrix appearing in Theorem~\ref{thm:main}.  The
factor $2$ in the definition of $J_{\rvec}(A^{\pvec})$ arises
naturally in calculations; it can also be used to put the second row
terms in the more symmetric form,
\begin{equation*}
  2\Big\langle \vr_1, \spdev{x}{A}\vr_2\Big\rangle
  = \Big\langle \vr_1, \spdev{x}{A}\vr_2\Big\rangle
  + \Big\langle \vr_2, \spdev{x}{A}\vr_1\Big\rangle.  
\end{equation*}

Finally, we remark that by our definitions
$F(A) = F\big(\widetilde{A}\big)$ and
$J_\rvec(A) = J_\rvec\big(\widetilde{A}\big)$.  Therefore, the tilde
(defined in equation~\eqref{eq:tildeA_def}) will usually be omitted
once we invoke functions $F$ and $J$.

\section{Discussion}
\label{sec:discussion}

\subsection{Geometric interpretation}
\label{sec:berry_phase}

What is described in this paper is a variation of the Newton-Raphson
method searching for a zero of the objective function
$\lambda_1(\rvec)-\lambda_2(\rvec)$.  This is only one condition on
two parameters (in the real symmetric case), and leads to an underdetermined
Newton-Raphson iteration.  In particular, given an initial guess
$\rvec_0$, we would like to update our guess to $\rvec_1$ such that
\begin{equation}
  \label{eq:tangent_cond}
  \Diff_{\rvec_0}\big(\lambda_1(\rvec)-\lambda_2(\rvec)\big) \, (\rvec_1-\rvec_0)
  = -\big(\lambda_1(\rvec_0)-\lambda_2(\rvec_0)\big).
\end{equation}
However, there is a whole line of points $\rvec_1$ that satisfy this
condition, as illustrated in Figure~\ref{fig:tangent_planes}.
\begin{figure}
\centering
\subfloat[]{\includegraphics[width=0.24\textwidth]{./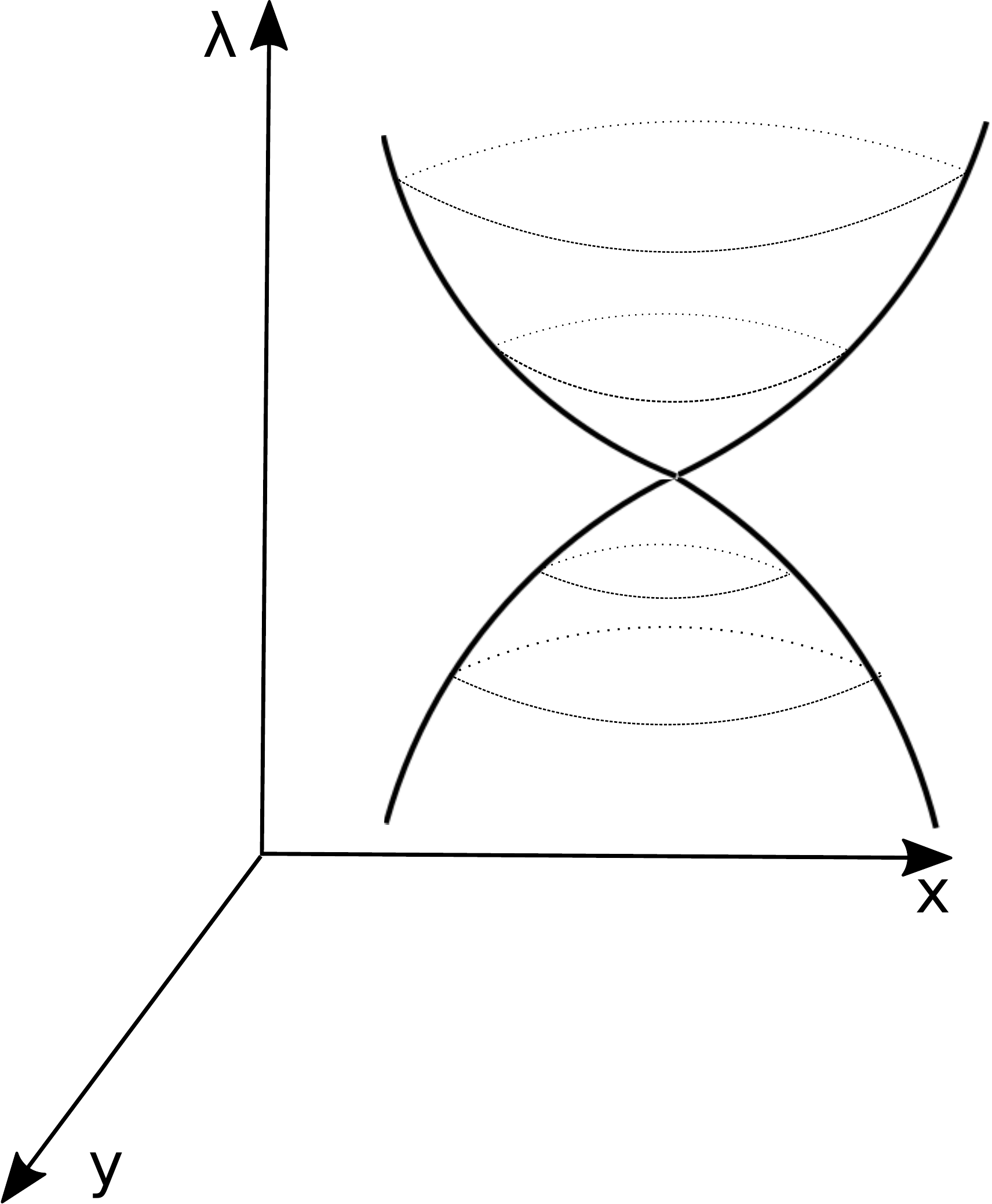}} 
\subfloat[]{\includegraphics[width=0.24\textwidth]{./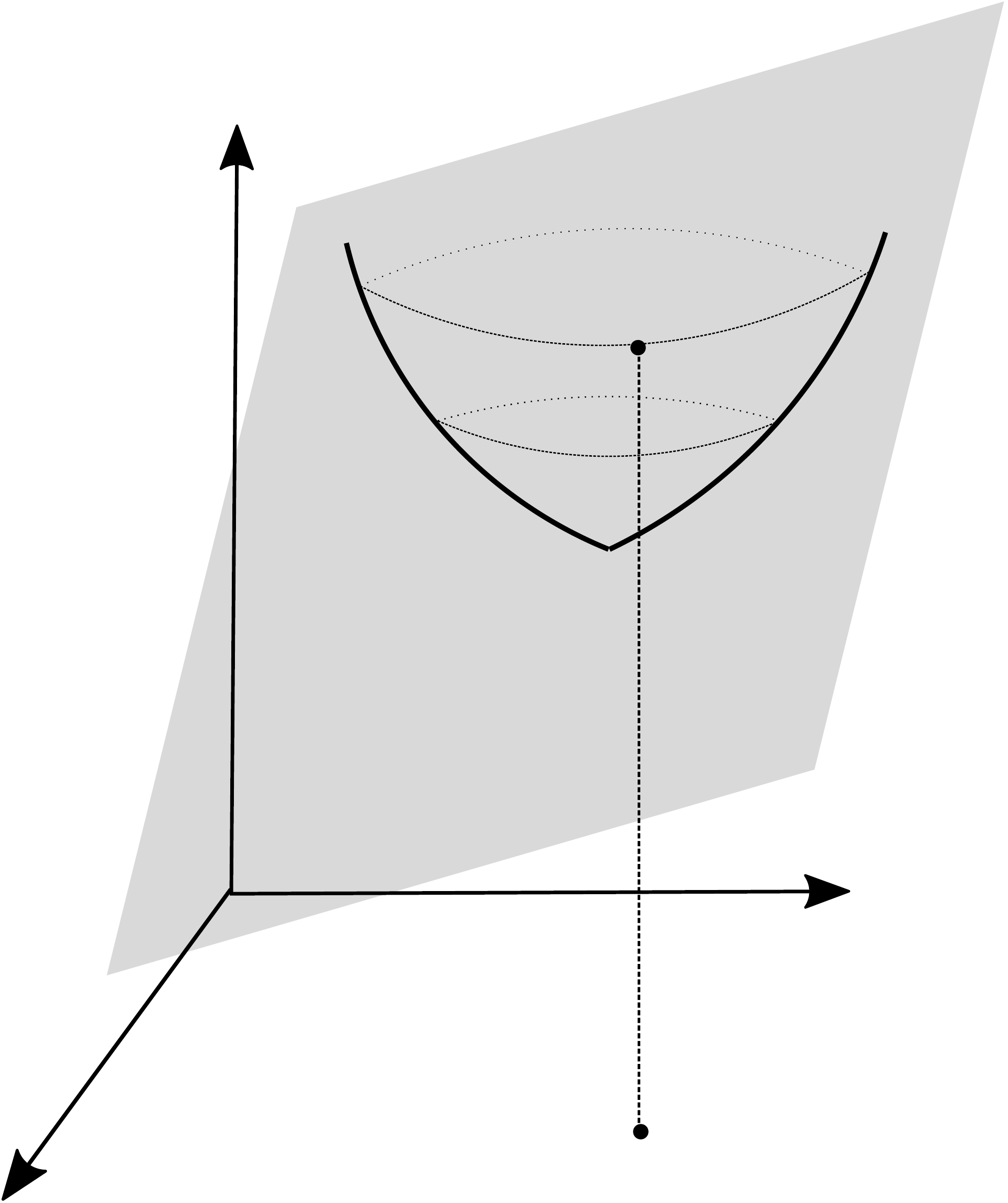}}
\subfloat[]{\includegraphics[width=0.24\textwidth]{./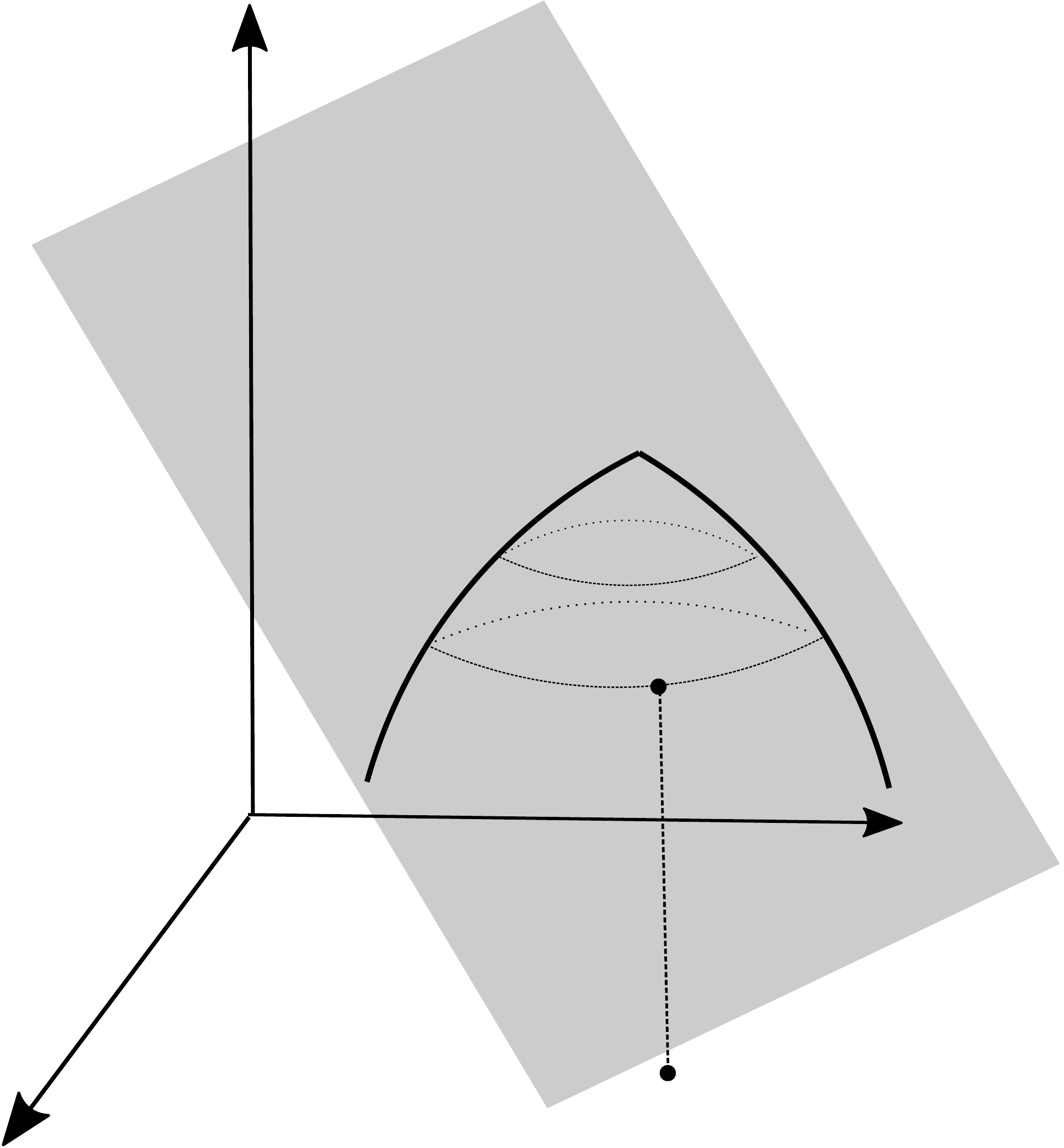}}
\subfloat[]{\includegraphics[width=0.24\textwidth]{./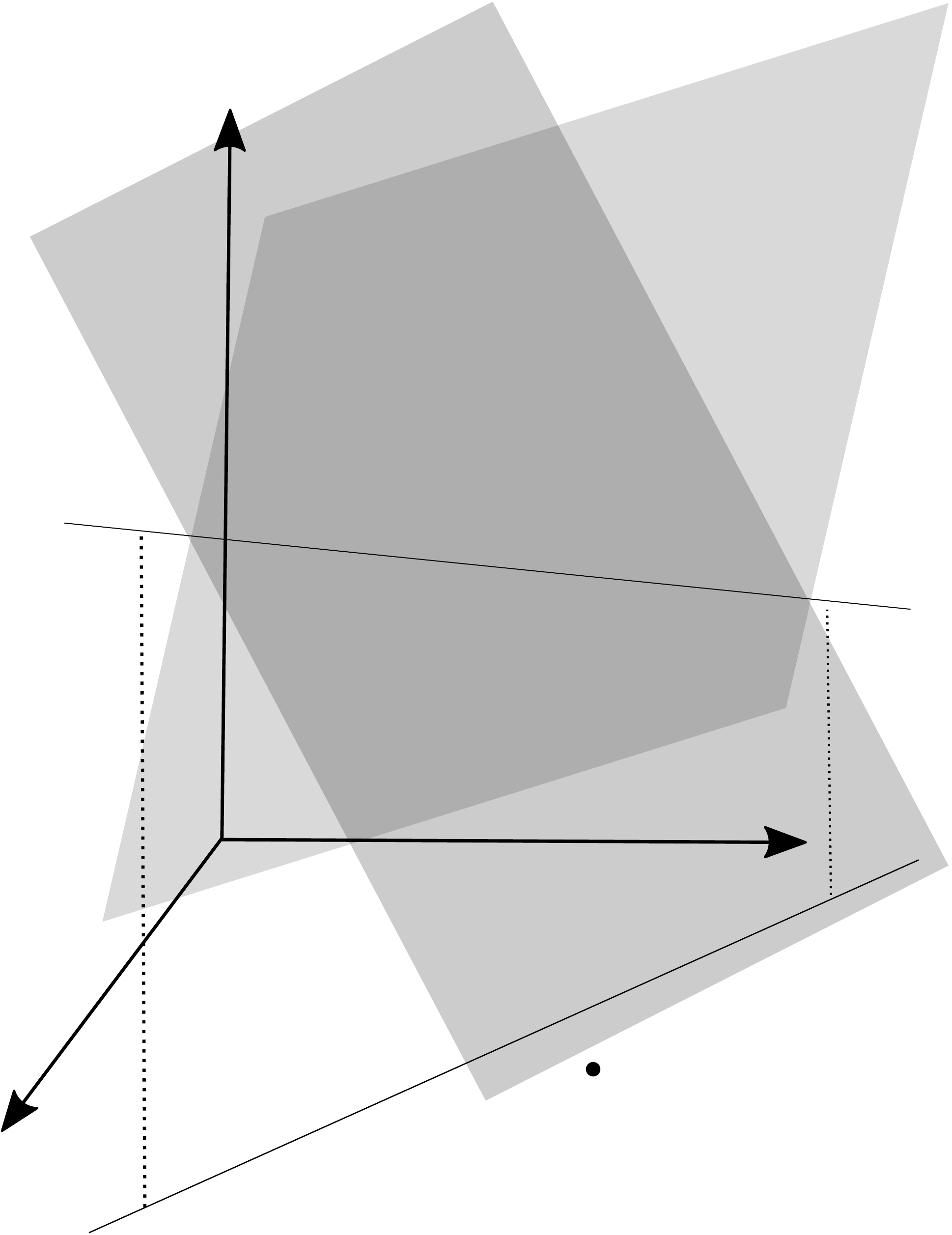}}
\caption{(a) Conical degeneracy of eigenvalues. (b) Linear
  approximation of top eigenvalue about the initial guess. (c) Linear
  approximation of bottom eigenvalue about the initial guess. (d) The
  intersection of the two linear approximations is a line, not a
  point. We need to use the conical nature of the intersection to
  determine a unique point to chose as our next guess.}
\label{fig:tangent_planes}
\end{figure}
To incorporate our knowledge that the degeneracy occurs at an isolated
point, we use a heuristic derived from Berry phase
\cite{HerLon_dfs63,Ber,Sim_prl83}, a phenomenon which underlies the
inability to find a smooth diagonalization around a degeneracy: on a
loop in the parameter space around a nondegenerate conical point, a
continuous choice of eigenvectors must rotate by $\pi$ (as opposed to
0 mod $2\pi$).

But if smoothly going in a loop around the degeneracy rotates the
eigenvectors, the direction of minimal rotation is a
direction \emph{towards the point of degeneracy}.  Let
$\{v_1(\rvec),v_2(\rvec)\}$ be a smooth choice of
normalized eigenvectors around the point $\rvec_0$ (this is possible
because $\rvec_0$ is not a point of eigenvalue multiplicity).  Then we
are looking for the direction in the parameter space in which the
eigenvector $v_1$ as a function of $\rvec$ does not
rotate in the plane spanned by $\{v_1(\rvec_0),v_2(\rvec_0)\}$
(it may still rotate ``out of the plane'').  This condition can be
written as
\begin{equation}\label{eq:nonrotation_cond}
  \Diff_{\rvec_0} \big\langle v_1(\rvec),
  v_2(\rvec_0)\big\rangle \, (\rvec_1-\rvec_0) = 0.
\end{equation}

Conditions~\eqref{eq:tangent_cond} and
\eqref{eq:nonrotation_cond} together generically\footnote{See
Sections~\ref{sec:conical_points} and \ref{sec:proofs} for a precise
formulation.} define a unique point $\rvec$ which can be taken as the
next step in the iteration.  We can solve for it explicitly using
the well-known perturbation formulas \cite{BorFoc,Kato_perturbation},
\begin{gather}
  \label{eq:pert_value}
  \Diff_{\rvec_0} \lambda_1 = \Diff_{\rvec_0} A^{\rvec_0}_{11},
  \qquad
  \Diff_{\rvec_0} \lambda_2 = \Diff_{\rvec_0} A^{\rvec_0}_{22},\\
  \label{eq:pert_vec}
  \Diff_{\rvec_0} \big\langle v_1(\rvec), v_2(\rvec_0)\big\rangle
   = \frac{\Diff_{\rvec_0} A^{\rvec_0}_{12}}{\lambda_1 - \lambda_2},      
\end{gather}
where
\begin{equation}
  \label{eq:Arij}
  A^{\rvec_0}_{ij}
  = A^{\rvec_0}_{ij}(\rvec)
  = \big\langle v_i(\rvec_0), A^{\rvec_0}(\rvec)v_j(\rvec_0) \big\rangle.
\end{equation}
We stress that in equation~\eqref{eq:Arij} the eigenvectors $v_1,v_2$
are evaluated at the point $\rvec_0$ and do not depend on $\rvec$.

The tangent planes condition~\eqref{eq:tangent_cond} and the
non-rotation condition~\eqref{eq:nonrotation_cond} can now be written
succinctly as
\begin{equation}
  \label{eq:two_cond_pert}
  \left[\Diff_{\rvec_0} 
    \begin{pmatrix}
      A^{\rvec_0}_{11}-A^{\rvec_0}_{22} \\
      2 A^{\rvec_0}_{12}
    \end{pmatrix}
    \right]
  (\rvec_1-\rvec_0)
  =
  \left[\Diff_{\rvec_0} F\big(A^{\rvec_0}(\rvec)\big) \right]
  (\rvec_1-\rvec_0)
  =
  \begin{pmatrix}
    \lambda_2-\lambda_1 \\
    0
  \end{pmatrix},
\end{equation}
or, less succinctly, as 
\begin{equation*}
  \begin{pmatrix} 
    \big\langle \vr_1, \spdev{x}{A}\vr_1\big\rangle
    -\big\langle \vr_2, \spdev{x}{A}\vr_2\big\rangle 
    & \big\langle \vr_1, \spdev{y}{A}\vr_1\big\rangle
    -\big\langle \vr_2, \spdev{y}{A}\vr_2\big\rangle \\
    2\big\langle \vr_1, \spdev{x}{A}\vr_2\big\rangle 
    & 2\big\langle \vr_1, \spdev{y}{A}\vr_2\big\rangle
  \end{pmatrix}
  (\rvec_1-\rvec_0)
  = 
  \begin{pmatrix}
    \lambda_2-\lambda_1 \\
    0
  \end{pmatrix},
\end{equation*}
which immediately leads to \eqref{iter_equation_simple}.

Berry phase also lies at the heart of another set of works devoted to
locating points of eigenvalue multiplicity.  Pugliese, Dieci and
co-authors
\cite{Pug_phd08,DiePug_mcs08,DiePug_siamjmaa09,DiePug_laa12,DiePapPug_siamjmaa13}
developed a procedure which uses Berry phase to grid-search available
space and identify regions with conical points.  For the final
convergence they used the standard Newton--Raphson method to locate
the critical point of $(\lambda_2-\lambda_1)^2$.  The convergence rate
of this final step is quadratic, as in Theorem~\ref{thm:main}; we
refer to Section~\ref{sec:mergingDirac} for a comparison of actual
convergence in an example.

In terms of ease of application, coding
equation~\eqref{iter_equation_simple} is straightforward and lack of
convergence of the method also carries information (see
Section~\ref{sec:avoided} and \ref{sec:mergingDirac}).  To perform a
thorough search of all available space and to locate all conical
points, it is preferable to use the methods of
\cite{Pug_phd08,DiePug_laa12,DiePapPug_siamjmaa13}.

\subsection{Relation to Newton-Raphson method}

Recalling the definition of $\widetilde{A}^{\rvec_0}$ and in
particular equation~\eqref{eq:tildeA_diag}, we have
\begin{equation*}
  \begin{pmatrix}
    \lambda_2-\lambda_1 \\
    0
  \end{pmatrix}
  = -F\big(A^{\rvec_0}(\rvec_0)\big).
\end{equation*}
This allows us to rewrite equation ~\eqref{eq:two_cond_pert} as
\begin{equation*}
  \left[ \Diff_{\rvec_0} F\big(A^{\rvec_0}(\rvec)\big) \right]
  \,(\rvec_1-\rvec_0)
  =  - F\big(A^{\rvec_0}(\rvec_0)\big),
\end{equation*}
which is the same as a single step of Newton--Raphson iteration applied
to $F(\widetilde{A}^{\rvec_0})$.  In other words,
$\rvec_1 = (x_1, y_1)$ is
chosen to be a solution to
\begin{equation}
  \label{first_order_expansion}
  \widetilde{A}^{\rvec_0}(\rvec_0)
  + (x_1-x_0)\spdev{x}{\widetilde{A}^{\rvec_0}}(\rvec_0)
  + (y_1-y_0)\spdev{y}{\widetilde{A}^{\rvec_0}}(\rvec_0)
  = \lambda I_2
\end{equation} 
for some $\lambda\in \mathbb{R}$.  Equivalently, $\rvec_1$ is the
point where the linear approximation to
$\widetilde{A}^{\rvec_0}(\rvec)$ has a double eigenvalue.

To understand the difference of our algorithm from a seemingly
conventional Newton--Raphson method, we need to revisit the
computation of $\widetilde{A}$.  It can be viewed as first expressing
$A(\rvec)$ in the eigenvector basis computed \emph{at the point} $\rvec_0$
and then extracting the $\{1,2\}$-sub-block of the resulting matrix.

In this notation, the problem of finding the degeneracy is equivalent
to finding a point $\rvec'$ such that
\begin{equation}
  \label{eq:wewant}
  \widetilde{A}^{\rvec'}(\rvec') = \lambda I_2,
  \qquad \mbox{for some }\lambda\in\R.
\end{equation}
In contrast, solving
equation~\eqref{first_order_expansion} is a first step in finding a
point $\rvec'$ such that
\begin{equation}
  \label{eq:wesolve}
  \widetilde{A}^{\rvec_0}(\rvec') = \lambda I_2,
  \qquad \mbox{for some }\lambda\in\R.
\end{equation}
Going all the way to find the solution $\rvec'$ to equation
\eqref{eq:wesolve} is pointless; this is not the equation we need to
solve.  Instead, we go one step, computing the first Newton--Raphson
approximation $\rvec_1$, and then update our target equation to
\begin{equation*}
  \widetilde{A}^{\rvec_1}(\rvec') = \lambda I_2,
  \qquad \mbox{for some }\lambda\in\R,
\end{equation*}
compute the first Newton--Raphson approximation $\rvec_2$ to
\emph{that} equation and so on.

\subsection{Complex Hermitian matrices}
\label{sec:complex}

Let us now consider a complex Hermitian matrix-valued function
$A\in C^2(\mathbb{R}^3, \mathbb{C}^{n\times n})$.  To find a point of
eigenvalue multiplicity, we typically need three real parameters (the
off diagonal terms can be complex, and that introduces an additional
degree of freedom), which we still denote by $\rvec = (x,y,z)$.

The conditions can now be written as
\begin{equation}
  \label{eq:three_cond_pert}
  \left[\Diff_{\rvec_0} G\big(A^{\rvec_0}(\rvec)\big)
  \right]
  (\rvec_1-\rvec_0)
  = 
  \begin{pmatrix}
    \lambda_2-\lambda_1 \\
    0 \\
    0
  \end{pmatrix},
\end{equation}
where
\begin{equation}
  \label{eq:objective_complex}
  G\big(A^{\rvec_0}\big) =
  \begin{pmatrix}
    A^{\rvec_0}_{11}-A^{\rvec_0}_{22} \\
    2 A^{\rvec_0}_{12} \\
    2 A^{\rvec_0}_{21}
  \end{pmatrix}.
\end{equation}
One can equivalently use the objective function
\begin{equation}
  \label{eq:objective_complex_ri}
  G\big(A^{\rvec_0}\big) =
  \begin{pmatrix}
    A^{\rvec_0}_{11}-A^{\rvec_0}_{22} \\
    2 \Real(A^{\rvec_0}_{12}) \\
    2 \Imag(A^{\rvec_0}_{21})
  \end{pmatrix}.
\end{equation}

\section{Conical Intersection}
\label{sec:conical_points}

Let $\alpha$ be a point in the parameter space such that $A(\alpha)$
has a double eigenvalue $\lambda_1=\lambda_2$.  The existence of
eigenvalue multiplicity precludes a smooth diagonalization in a region
containing the degeneracy. However, a smooth block diagonalization
exists.  The standard construction (see, for example, \cite[II.4.2 and
Remark 4.4 therein]{Kato_perturbation}) uses Riesz projector.

We can choose a contour $\gamma: [0,1]\mapsto \mathbb{C}$ with
$\gamma(0) = \gamma(1)$ enclosing $\lambda_1$, $\lambda_2$ and no other
point in the spectrum of $A(\alpha)$.  This property of $\gamma$ must
persist for $A(\rvec)$ when $\rvec$ is in a small neighborhood of
$\alpha$.  The Riesz projector
\begin{equation}
  \label{eq:Riesz_def}
  P(\rvec)
  = \int_{\gamma} (A(\rvec)-\lambda I_n)^{-1} d\lambda  
\end{equation}
projects onto the space spanned by the eigenvectors of
$\lambda_1(\rvec)$ and $\lambda_2(\rvec)$ \cite{HisSig}.  The
projector itself is smooth, as the points on the contour are all in
the resolvent set of $A$ (and so $A-\lambda I_n$ has a bounded inverse
for all $\lambda\in \Gamma$).  Starting with an arbitrary eigenvector
basis $\{v_1,v_2\}$ at $\alpha$, we can obtain a basis at a nearby
$\rvec$ by applying Gram-Schmidt procedure to the set
$\left\{P(\rvec) v_1, P(\rvec) v_2\right\}$, which preserves
smoothness.  We can do the same with the orthogonal complement
$I-P(\rvec)$ and a complementary basis to $\{v_1,v_2\}$.  To
summarize, for some region $\Omega\in \mathbb{R}^2$ with
$\alpha\in \Omega$, we find a change of basis
$M(\cdot)\in C^2(\Omega, R^{n\times n})$ such that
\begin{equation}
  \label{block_diagonalization}
  M(\rvec)^*A(\rvec)M(\rvec) = B(\rvec)\oplus\Lambda(\rvec),
\end{equation}
where $B\in C^2(\Omega, \mathbb{R}^{2\times 2})$ and
$\Lambda\in C^2(\Omega, \mathbb{R}^{(n-2)\times (n-2)})$.
We can further diagonalize both $B$ and $A$ at any point $\rvec_0$ to
obtain 
\begin{equation}
  \label{diag_block_diagonalization}
  \Gamma(\rvec)^*A^{\rvec_0}(\rvec)\Gamma(\rvec)
  = B^{\rvec_0}(\rvec)\oplus\Lambda(\rvec),
\end{equation}
where $\Gamma(\rvec) = V^TM(\rvec)(W\oplus I_{n-2})
\in C^2(\Omega, R^{n\times n})$, and both
\begin{equation*}
  A^{\rvec_0}(\cdot) := V^TA(\cdot)V
  \qquad\mbox{and}\qquad 
  B^{\rvec_0}(\cdot) := W^TB(\cdot)W  
\end{equation*}
are diagonal at $\rvec_0$.
A stronger result from Hsieh, and Sibuya \cite{HsiSib}, and Gingold
\cite{Gin} states that such block-diagonalization exists even for
matrices that are not necessarily Hermitian, and for any closed
rectangular region that contains an isolated degeneracy.

Note that since $B$ is a $2\times 2$ matrix which has an eigenvalue
multiplicity at the point $\alpha$, $B(\alpha)$ is a multiple of the
identity.  The eigenvalue multiplicity is detected by the
\emph{discriminant} of $B$ which in the $2\times2$ case is defined as
\begin{equation}
  \label{eq:discr_B}
  \disc(B) := (\lambda_1-\lambda_2)^2 = (B_{11}-B_{22})^2+4B_{12}^2.
\end{equation}
The discriminant achieves its minimum value 0 at the point $\alpha$.
It is also a $C^2$ function of $\rvec$ and its Hessian is well-defined.

\begin{definition}\label{def_conical_degeneracy}
  A point of eigenvalue multiplicity $\alpha$ is \emph{a non-degenerate
    conical point} if $\disc(B(\rvec))$ has a non-degenerate critical
  point at $\rvec=\alpha$.
\end{definition}

In other words, there is a positive definite matrix $H$ (the
``Hessian'') such that
\begin{equation*}
  \disc(B(\rvec))
  = \big\langle (\rvec-\alpha), H(\rvec-\alpha)\big\rangle
  + o\!\left(|\rvec-\alpha|^2\right),
\end{equation*}
and therefore, along any ray originating at $\alpha$, the eigenvalues
are separating at a non-zero linear rate.  This picture justifies the
use of the term ``conical''.

Unfortunately, while existence of $B(\rvec)$ is assured, it is not
easily accessible.  The following theorem provides a more
practical method of checking if $\alpha$ is non-degenerate.

\begin{theorem}
  \label{thm:Hess_disc}
  The Hessian of $\disc(B)$ at $\alpha$ is given by
  \begin{equation}
    \label{eq:Hess_disc}
    \Hess_\alpha (\disc(B))
    = 2 J_{\alpha}(B)^T J_{\alpha}(B)
    = 2 J_\alpha(A^\alpha)^T J_\alpha(A^\alpha).
  \end{equation}
  Consequently, $\alpha$ is a non-degenerate conical point if and only
  if $\det J_\alpha(A^\alpha) \neq 0$.
\end{theorem}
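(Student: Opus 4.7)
The plan is to prove the two equalities in \eqref{eq:Hess_disc} in turn and then read off the characterization.

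For the first equality, I would set $f(\rvec) = B_{11}(\rvec)-B_{22}(\rvec)$ and $g(\rvec) = 2B_{12}(\rvec)$, so that $F(B) = (f,g)^T$ and $\disc(B) = f^2+g^2$. Since $B(\alpha) = \lambda I_2$ (because $B$ is a $2\times 2$ matrix with a double eigenvalue at $\alpha$), we have $f(\alpha)=g(\alpha)=0$, confirming $\alpha$ is a critical point and ensuring that in the second partials of $f^2+g^2$ the ``$f\,\partial^2 f$'' and ``$g\,\partial^2 g$'' terms drop out. A short direct calculation then yields
\begin{equation*}
  \Hess_\alpha(\disc(B))
  = 2\bigl[(\nabla f)(\nabla f)^T + (\nabla g)(\nabla g)^T\bigr]
  = 2\,J_\alpha(B)^T J_\alpha(B),
\end{equation*}
where in the last step I use that the rows of $J_\alpha(B)$ are exactly $(\nabla f)^T$ and $(\nabla g)^T$.

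The second equality is the substantive step: I would prove that $\Diff_\alpha B = \Diff_\alpha \widetilde{A}^\alpha$ as $2\times 2$-matrix-valued derivatives, which immediately gives $J_\alpha(B) = J_\alpha(\widetilde{A}^\alpha) = J_\alpha(A^\alpha)$. Using the smooth orthogonal block-diagonalizer $M(\rvec)$ from \eqref{block_diagonalization} with $M(\alpha)$ chosen so its first two columns are $v_1,v_2$, write $B_{ij}(\rvec) = \langle m_i(\rvec), A(\rvec) m_j(\rvec)\rangle$ for $i,j\in\{1,2\}$ and differentiate at $\alpha$. Two of the three resulting terms take the form $\langle \partial_k m_i|_\alpha, A(\alpha) v_j\rangle + \langle v_i, A(\alpha)\partial_k m_j|_\alpha\rangle$, which because $v_1$ and $v_2$ share the eigenvalue $\lambda$ collapses to $\lambda\bigl[\langle \partial_k m_i|_\alpha, v_j\rangle + \langle v_i, \partial_k m_j|_\alpha\rangle\bigr]$; this bracket vanishes by differentiating the orthonormality relation $\langle m_i,m_j\rangle = \delta_{ij}$. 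The surviving middle term is $\langle v_i, \partial_k A(\alpha) v_j\rangle = \partial_k \widetilde{A}^\alpha_{ij}|_\alpha$, giving the desired equality of Jacobians. This cancellation, driven by the coincidence of the eigenvalues at the conical point, is the main obstacle and the crux of the whole theorem.

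Finally, for the ``consequently'' clause, I would observe that $\disc(B)\ge 0$ with equality at $\alpha$, so $\Hess_\alpha(\disc(B))$ is automatically positive semidefinite; it is non-degenerate iff it is positive definite. Since it equals $2 J_\alpha(A^\alpha)^T J_\alpha(A^\alpha)$, a Gram matrix of the $2\times 2$ matrix $J_\alpha(A^\alpha)$, positive definiteness is equivalent to $J_\alpha(A^\alpha)$ being invertible, i.e.\ to $\det J_\alpha(A^\alpha)\neq 0$.
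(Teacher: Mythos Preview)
Your proof is correct and follows essentially the same route as the paper: the first equality via $\disc(B)=|F(B)|^2$ with $F(B(\alpha))=\mathbf{0}$, and the second via differentiating $B_{ij}=\langle m_i,A\,m_j\rangle$ and using $\lambda_1(\alpha)=\lambda_2(\alpha)$ together with orthonormality to kill the cross terms. The one organizational difference is that the paper isolates the derivative comparison into a standalone lemma valid at any point $\rvec_0$ (not just $\alpha$), yielding $J_{\rvec_0}(B^{\rvec_0})=J_{\rvec_0}(A^{\rvec_0})+(\lambda_2-\lambda_1)(\text{bounded})$; at $\rvec_0=\alpha$ the correction vanishes and one recovers exactly your conclusion, while away from $\alpha$ the general form is reused in the convergence proof of Section~\ref{sec:proofs}. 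Your direct computation at $\alpha$ is entirely sufficient for the present theorem but does not produce that reusable identity.
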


The condition $\det J_\alpha(A^\alpha) \neq 0$ has a nice geometric
meaning: it is precisely the condition that the manifold
$\widetilde{A}^\alpha$ of $2\times2$ real symmetric matrices is
transversal to the line of $2\times2$ symmetric matrices with repeated
eigenvalues (cf.\ \cite[Def.\ 1]{ONe_siamjmaa05}).

The choice of basis in the definition of
$\widetilde{A}^\alpha$ is assumed to align with the choice of basis
used to compute $B(\rvec)$, i.e.\ the first two columns of $M(\alpha)$
are the eigenvectors used to compute $\widetilde{A}^\alpha$.  This
choice does not affect the definition of the non-degenerate point
because of the following lemma.

\begin{lemma}
  \label{lem:basis_change_FJ}
  Let $A\in C^2(\mathbb{R}^2, \mathbb{R}^{2\times 2})$ be a $2\times2$
  matrix-valued function of $\rvec\in \mathbb{R}^2$.  Then for any 
  orthogonal matrix $U\in\mathbb{R}^{2\times 2}$ there is an
  orthogonal matrix $W\in\mathbb{R}^{2\times 2}$ such that for
  all
  $\rvec$ we
  have
  \begin{equation}
    \label{eq:conj_FJ}
    F(U^TAU) = WF(A), \qquad
    J_\rvec(U^TAU) = W J_\rvec(A),    
  \end{equation}
  and therefore
  \begin{equation}
    \left|\det(J_\rvec(A))\right| =
    \left|\det(J_\rvec(U^TAU))\right|.
  \end{equation}
\end{lemma}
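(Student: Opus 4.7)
The plan rests on a single structural observation: for any $2\times 2$ real symmetric matrix $A$, the target function
\[
 F(A) = \begin{pmatrix} A_{11}-A_{22} \\ 2A_{12}\end{pmatrix}
\]
depends only on the traceless part $A_0 := A - \tfrac12(\operatorname{tr} A)I$ of $A$, and its restriction to the two-dimensional space $\mathcal{S}_0$ of traceless real symmetric $2\times 2$ matrices is a linear isomorphism onto $\mathbb{R}^2$. Using the basis $\sigma_3 = \operatorname{diag}(1,-1)$, $\sigma_1 = \begin{pmatrix} 0 & 1 \\ 1 & 0\end{pmatrix}$ of $\mathcal{S}_0$, one sees that $F(a\sigma_3 + b\sigma_1) = (2a,2b)^T$, while the Frobenius (trace) inner product $\langle X,Y\rangle := \operatorname{tr}(XY)$ gives $\langle a\sigma_3 + b\sigma_1,\, a'\sigma_3 + b'\sigma_1\rangle = 2(aa'+bb')$. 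Thus $F$, restricted to $\mathcal{S}_0$, is a constant multiple of a linear isometry onto $\mathbb{R}^2$ with its standard inner product.

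Next I would observe that conjugation $T_U : X \mapsto U^T X U$ by an orthogonal $U \in O(2)$ preserves symmetry, preserves the trace (hence restricts to an endomorphism of $\mathcal{S}_0$), and preserves the trace form, since $\operatorname{tr}(U^T X U \cdot U^T Y U) = \operatorname{tr}(XY)$. Therefore $T_U$ acts as an orthogonal transformation on $(\mathcal{S}_0,\langle\cdot,\cdot\rangle)$, and transporting this action through the isometry $F$ yields an orthogonal matrix $W = W(U) \in O(2)$, depending only on $U$ (and not on $A$ or on $\rvec$), with $F(U^T A U) = W F(A)$ for every symmetric $A$. As a sanity check, taking $U$ to be the rotation by angle $\theta$ one recovers the familiar double-angle picture $W = \operatorname{Rot}(-2\theta)$, and reflections are handled identically; but existence of $W$ is already immediate from the coordinate-free identification above.

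Because $W$ is independent of $\rvec$, applying $\Diff_\rvec$ to the identity $F\bigl(U^T A(\rvec) U\bigr) = W F\bigl(A(\rvec)\bigr)$ gives $J_\rvec(U^T A U) = W J_\rvec(A)$, and taking determinants yields
\[
 |\det J_\rvec(U^T A U)| = |\det W|\cdot|\det J_\rvec(A)| = |\det J_\rvec(A)|,
\]
since $|\det W|=1$ for any orthogonal $W$. The entire argument hinges on recognising $T_U$ as an isometry of the trace inner product on $\mathcal{S}_0$, which is a one-line computation; I do not anticipate any genuine obstacle, only a choice of notation that keeps the role of the various orthogonal spaces transparent.
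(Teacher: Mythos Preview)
Your proof is correct and follows essentially the same route as the paper's: both recognise that conjugation by $U$ descends to a linear map on the two-dimensional space of symmetric matrices modulo scalars (you use the traceless complement $\mathcal{S}_0$, the paper uses the quotient $\mathcal{S}^2/\Ker(F)$), and both verify that this induced map is orthogonal via a norm-preservation argument (you invoke the Frobenius inner product directly, the paper uses the discriminant identity $|F(A)|^2=\disc(A)=\disc(U^TAU)$). The differentiation step to obtain the Jacobian identity and the determinant conclusion are identical.
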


\begin{proof}
  This identity for $2\times2$ matrix-functions can be checked by
  direct computation but the details are excessively tedious.  Instead
  we use a more generalizable approach.

  We fix an orthogonal $U$ and let $\mathcal{S}^2$ denote the linear
  space of $2\times2$ real symmetric matrices.  The map $F$, see
  equation~\eqref{eq:target_function_def}, acts as a linear
  transformation from $\mathcal{S}^2$ to $\mathbb{R}^2$.  It is
  obviously onto and has the kernel $\Ker(F)$ consisting of multiples 
  of the identity. On the other hand, conjugation by $U$ (namely the map
  $A \mapsto U^TAU$) is a linear transformation of $\mathcal{S}^2$ to
  itself.  It maps multiples of the identity to themselves and
  therefore induces a linear transformation from the quotient space
  $\mathcal{S}^2 / \Ker(F)$ to itself.  This linear transformation,
  via the isomorphism $F$ between $\mathcal{S}^2 / \Ker(F)$ and
  $\mathbb{R}^2$, induces a linear transformation on $\mathbb{R}^2$
  mapping $F(A)$ to $F(U^TAU)$.
  
  We summarize the above in the
  commutative diagram
  \begin{equation*}
    \begin{CD}
      \mathcal{S}^2 @>{A \mapsto U^TAU}>> \mathcal{S}^2 \\
      @V{F}VV @V{F}VV\\
      \mathbb{R}^2 @>W>> \mathbb{R}^2
    \end{CD}
  \end{equation*}
  In other words, for a given orthogonal $U$, there exists a constant
  $2\times2$ matrix $W$ such that
  \begin{equation*}
    F(U^TAU) = WF(A).
  \end{equation*}
  From the identity (see \eqref{eq:discr_B} for the definition of
  discriminant)
  \begin{equation*}
    \left|F(A)\right|^2
    = \disc(A)
    = \disc(U^TAU)
    = \left|F(U^TAU)\right|^2    
  \end{equation*}
  we conclude that $W$ is orthogonal.
  Finally, taking derivatives we get
  \begin{equation*}
    J(U^TAU) = W J(A),      
    \quad\implies\quad
    \det(J(U^TAU)) = \det(WJ(A)) = \pm\det(J(A)),  
  \end{equation*}
  since determinant of an orthogonal matrix is either $1$ or $-1$.
\end{proof}

The following identity will be helpful in the proof of
Theorem~\ref{thm:Hess_disc} and also in Section~\ref{sec:proofs}.

\begin{lemma}
  \label{lem:jacobian_difference}
  For any $A^{\rvec_0}$ and $B^{\rvec_0}$ as in
  equation~\eqref{diag_block_diagonalization}, 
  \begin{equation}
    \label{jacobian_difference}
    J_{\rvec_0}(B^{\rvec_0})
    = J_{\rvec_0}(A^{\rvec_0})
    + 2(\lambda_2-\lambda_1)
    \begin{pmatrix}
      0& 0\\
      \big\langle \spdev{x}{\gamma_1}, \gamma_2\big\rangle
      & \big\langle \spdev{y}{\gamma_1}, \gamma_2\big\rangle
    \end{pmatrix},
  \end{equation}
  where $\gamma_{1,2}=\gamma_{1,2}(\rvec_0)$ are the first two columns
  of the matrix $\Gamma(\rvec_0)$.
\end{lemma}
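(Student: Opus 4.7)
\medskip

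The plan is to compute both sides of \eqref{jacobian_difference} directly from the defining relation \eqref{diag_block_diagonalization}, using that at $\rvec_0$ the vectors $\gamma_1(\rvec_0),\gamma_2(\rvec_0)$ are eigenvectors of the diagonal matrix $A^{\rvec_0}(\rvec_0)$.  By choosing the bases $V$ and $W$ consistently (recall that $\Gamma=V^TM(W\oplus I_{n-2})$) we may assume that $\gamma_1(\rvec_0)=e_1$, $\gamma_2(\rvec_0)=e_2$; the conclusion then only involves quantities that are invariant under orthogonal changes of basis in the first two coordinates, so this normalization is harmless.  Under this normalization, the top-left $2\times 2$ block of the identity $\Gamma^*A^{\rvec_0}\Gamma=B^{\rvec_0}\oplus\Lambda$ yields
\begin{equation*}
  B^{\rvec_0}_{ij}(\rvec)
  = \big\langle \gamma_i(\rvec),\, A^{\rvec_0}(\rvec)\,\gamma_j(\rvec)\big\rangle,
  \qquad i,j\in\{1,2\}.
\end{equation*}

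The next step is to differentiate this relation by $x$ (and analogously by $y$) using the product rule.  Evaluating at $\rvec_0$ and using $A^{\rvec_0}(\rvec_0)\gamma_j(\rvec_0)=\lambda_j\gamma_j(\rvec_0)$, the two ``moving eigenvector'' terms contribute
\begin{equation*}
  \lambda_j\big\langle\spdev{x}{\gamma_i},\gamma_j\big\rangle
  +\lambda_i\big\langle\gamma_i,\spdev{x}{\gamma_j}\big\rangle,
\end{equation*}
while the central term reproduces $\spdev{x}{A^{\rvec_0}_{ij}}$.  For $i=j$, the unit-norm constraint $\|\gamma_i\|\equiv 1$ forces $\langle\spdev{x}{\gamma_i},\gamma_i\rangle=0$, so both moving-eigenvector terms vanish and $\spdev{x}{B^{\rvec_0}_{ii}}|_{\rvec_0}=\spdev{x}{A^{\rvec_0}_{ii}}|_{\rvec_0}$.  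This establishes that the first row of $J_{\rvec_0}(B^{\rvec_0})$ (the $B_{11}-B_{22}$ row) agrees exactly with the first row of $J_{\rvec_0}(A^{\rvec_0})$, accounting for the first row of zeros in the correction matrix.

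For the $(1,2)$ entry, I would use the orthogonality $\langle\gamma_1,\gamma_2\rangle\equiv 0$, which gives $\langle\gamma_1,\spdev{x}{\gamma_2}\rangle=-\langle\spdev{x}{\gamma_1},\gamma_2\rangle$.  Substituting, the moving-eigenvector contribution collapses to $(\lambda_2-\lambda_1)\langle\spdev{x}{\gamma_1},\gamma_2\rangle$, whence
\begin{equation*}
  2\,\spdev{x}{B^{\rvec_0}_{12}}\Big|_{\rvec_0}
  = 2\,\spdev{x}{A^{\rvec_0}_{12}}\Big|_{\rvec_0}
  + 2(\lambda_2-\lambda_1)\big\langle\spdev{x}{\gamma_1},\gamma_2\big\rangle.
\end{equation*}
The same calculation with $y$ in place of $x$ gives the second column entry.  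Assembling the four entries into the Jacobian reproduces exactly \eqref{jacobian_difference}.

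The main obstacle is a bookkeeping one: making sure that $\gamma_i(\rvec_0)=e_i$ is truly without loss of generality, so that the identification $B^{\rvec_0}_{ij}(\rvec_0)=A^{\rvec_0}_{ij}(\rvec_0)$ for the $(1,2)$-block is legitimate; this is where Lemma~\ref{lem:basis_change_FJ}, together with the freedom in choosing $V$ and $W$, must be invoked.  Once that is pinned down, the proof is a short two-line computation distinguishing the diagonal and off-diagonal cases.
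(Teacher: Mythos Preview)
Your proof is correct and takes the same route as the paper: differentiate $B^{\rvec_0}_{ij}=\langle\gamma_i,A^{\rvec_0}\gamma_j\rangle$, use $A^{\rvec_0}(\rvec_0)\gamma_j=\lambda_j\gamma_j$ together with orthonormality of the $\gamma_j$, and read off $\partial_x B^{\rvec_0}_{ij}=\partial_x A^{\rvec_0}_{ij}+(\lambda_j-\lambda_i)\langle\partial_x\gamma_i,\gamma_j\rangle$.  The one place you are more explicit than the paper is the normalization $\gamma_i(\rvec_0)=e_i$; this does not in fact require Lemma~\ref{lem:basis_change_FJ}, since $A^{\rvec_0}(\rvec_0)$ is diagonal with simple first two entries, so the eigenvector relation already forces $\gamma_i(\rvec_0)=\pm e_i$, and the sign can be absorbed into the choice of $W$ (or $V$).
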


\begin{proof}
  We remark that identity~\eqref{jacobian_difference} is only claimed
  for the Jacobian evaluated at the point where both $A^{\rvec_0}$ and
  $B^{\rvec_0}$ are diagonal, therefore $A^{\rvec_0}\gamma_j(\rvec_0)
  = \lambda_j(\rvec_0)\gamma_j(\rvec_0)$.

  For all $\rvec$, $\gamma_j(\rvec)$ are orthonormal and
  differentiating
  $\langle \gamma_i, \gamma_j \rangle = \mathrm{const}$ we get
  \begin{equation}
    \label{eq:commutation_gamma}
    \left\langle\spdev{x}{\gamma_i}, \gamma_j\right\rangle
    = -\left\langle\gamma_i, \spdev{x}{\gamma_j}\right\rangle.
  \end{equation}

  We can now relate the derivatives of
  $A^{\rvec_0}$ to the derivatives of $B^{\rvec_0}$,
 \begin{align*}
   \pdev{x}{B^{\rvec_0}_{ij}}
   &=\frac{\partial}{\partial x} \big\langle \gamma_j,
     A^{\rvec_0}\gamma_i\big\rangle\\
   &=\left\langle \gamma_i, \spdev{x}{A^{\rvec_0}} \gamma_j\right\rangle
     +\left\langle\spdev{x}{\gamma_i}, A^{\rvec_0}\gamma_j\right\rangle
     +\left\langle \gamma_i, A^{\rvec_0}\spdev{x}{\gamma_j}\right\rangle\\
   &=\spdev{x}{A^{\rvec_0}_{ij}}
     + \lambda_j \left\langle\spdev{x}{\gamma_i}, \gamma_j\right\rangle
     + \lambda_i \left\langle\gamma_i, \spdev{x}{\gamma_j}\right\rangle\\
   &=\spdev{x}{A^{\rvec_0}_{ij}}
     + (\lambda_j- \lambda_i)
     \left\langle\spdev{x}{\gamma_i}, \gamma_j\right\rangle,
     \qquad i,j \in \{1,2\}.
 \end{align*}
 The calculation is identical for $y$ derivatives.
\end{proof}

\begin{proof}[Proof of Theorem~\ref{thm:Hess_disc}]
  We write
  \begin{equation*}
    \disc(B) = (B_{11}-B_{22})^2+4B_{12}^2
    = \left\langle F(B), F(B) \right\rangle,
  \end{equation*}
  and note that $F(B(\alpha)) = \mathbf{0}$.  The latter observation
  implies that the product rule for the second derivatives at the
  point $\alpha$ collapses to
  \begin{equation*}
    \frac{\partial^2}{\partial x_i \partial x_j}
    \left\langle F(B), F(B) \right\rangle
    = 2 \left\langle \frac{\partial F(B)}{\partial x_i},
      \frac{\partial F(B)}{\partial x_j} \right\rangle,
    \qquad x_i,x_j \in \{x,y\}.
  \end{equation*}
  Therefore the Hessian can be written as
  \begin{equation*}
    \Hess_\alpha \langle F(B), F(B) \rangle =
    2
    \begin{pmatrix}
      \frac{\partial F(B)^T}{\partial x}\\[5pt]
      \frac{\partial F(B)^T}{\partial y}      
    \end{pmatrix}
    \begin{bmatrix}
      \frac{\partial F(B)}{\partial x} 
      &\frac{\partial F(B)}{\partial y}
    \end{bmatrix} = 2J_{\alpha}(B)^TJ_{\alpha}(B).
  \end{equation*}
  Finally, setting $\rvec_0=\alpha$ in
  Lemma~\ref{lem:jacobian_difference} yields  
  \begin{equation}
    \label{eq:deriv_equal}
    J_{\alpha}(B) = J_{\alpha}(A^\alpha),
  \end{equation}
  and concludes the proof of \eqref{eq:Hess_disc}.
 \end{proof}

\section{Proof of the main result}
\label{sec:proofs}

Here we restate the procedure used to locate the degeneracy in the
notation that has been introduced.

\begin{theorem}
  \label{main_thm_proofs}
  Let $\sigma\colon C^2(\mathbb{R}^2, \mathbb{R}^{2\times 2})
  \times \mathbb{R}^2\to \mathbb{R}^2$ be defined by
  \begin{equation}
    \label{eq:sigma_def}
    \sigma(S, \rvec) = \rvec - J_{\rvec}(S)^{-1}F_\rvec(S).    
  \end{equation}
  
  Let $A\in C^2(\mathbb{R}^2, \mathbb{R}^{n\times n})$ have a
  non-degenerate conical point at $\alpha$ between eigenvalues
  $\lambda_1$ and $\lambda_2$. Then there exists an open
  $\Omega\subset \mathbb{R}^2$ with $\alpha\in \Omega$ and
  $\exists C\in \mathbb{R}$, such that for all $\rvec\in \Omega$,
  \begin{equation}
    \label{eq:sigma_est}
    |\sigma(\widetilde{A}^{\rvec}, \rvec)-\alpha|
    < C|\rvec-\alpha|^2,    
  \end{equation}
  where the $2\times2$ matrix-function
  $\widetilde{A}^{\rvec}(\cdot)\in C^2(\mathbb{R}^2,
  \mathbb{R}^{2\times 2})$ is defined by
  \begin{equation}
    \label{eq:Atilde_def}
    \widetilde{A}^{\rvec}(\cdot) = V^T A(\cdot) V,
  \end{equation}
  with the constant $n\times 2$ matrix $V = (v_1\ v_2)$ whose columns
  are the eigenvectors of $A(\rvec)$.
\end{theorem}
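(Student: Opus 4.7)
The plan is to derive quadratic convergence from a single-variable Taylor expansion: at fixed $\rvec$, expand the $C^2$ function $\rvec' \mapsto F\big(\widetilde{A}^\rvec(\rvec')\big)$ about $\rvec' = \rvec$ and evaluate at $\rvec' = \alpha$. Since $F$ is linear in its matrix argument and the second $\rvec'$-derivative of $\widetilde{A}^\rvec(\rvec') = V^T A(\rvec') V$ is bounded by a multiple of $\|D^2 A\|$ uniformly in $\rvec$ (the columns of $V$ being orthonormal), Taylor's theorem gives
\begin{equation*}
  F\big(\widetilde{A}^\rvec(\alpha)\big)
  = F\big(\widetilde{A}^\rvec(\rvec)\big) + J_\rvec(\widetilde{A}^\rvec)(\alpha-\rvec) + O(|\rvec-\alpha|^2)
\end{equation*}
with an error constant uniform on a compact neighborhood of $\alpha$. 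Substituting into \eqref{eq:sigma_def} yields
\begin{equation*}
  \sigma(\widetilde{A}^\rvec,\rvec) - \alpha
  = -J_\rvec(\widetilde{A}^\rvec)^{-1}\, F\big(\widetilde{A}^\rvec(\alpha)\big)
   + J_\rvec(\widetilde{A}^\rvec)^{-1}\cdot O(|\rvec-\alpha|^2),
\end{equation*}
so the theorem reduces to two claims: (i) uniform boundedness of $J_\rvec(\widetilde{A}^\rvec)^{-1}$ in a neighborhood of $\alpha$, and (ii) the key estimate $F\big(\widetilde{A}^\rvec(\alpha)\big) = O(|\rvec-\alpha|^2)$.

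To establish (i), I would use the smooth block diagonalization $B(\rvec)$ of \eqref{block_diagonalization}. Lemma~\ref{lem:basis_change_FJ} shows that conjugation by orthogonal matrices permutes $F$ and $J_\rvec$ only by $2\times 2$ orthogonal maps, so $|\det J_\rvec(\widetilde{A}^\rvec)|$ differs from the continuous function $|\det J_\rvec(B)|$ only via the correction identified in Lemma~\ref{lem:jacobian_difference}, which has size $O(\lambda_2-\lambda_1)=O(|\rvec-\alpha|)$. Non-degeneracy of the conical point and Theorem~\ref{thm:Hess_disc} give $|\det J_\alpha(B)|>0$, so by continuity this persists in a neighborhood; together with the obvious entry-wise bound on $J_\rvec(\widetilde{A}^\rvec)$ coming from $\|DA\|$, one obtains a uniform bound on the inverse.

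The hardest step, and the main obstacle, is (ii), precisely because $V=V(\rvec)$ is not continuous as $\rvec\to\alpha$ (the eigenvector basis at a conical point is not uniquely defined). The remedy is to replace individual eigenvectors by the Riesz projector $\Pi(\rvec) = V(\rvec)V(\rvec)^T$ onto the eigenspace of $\lambda_1(\rvec),\lambda_2(\rvec)$, which is smooth in $\rvec$ (cf.\ \eqref{eq:Riesz_def}). Writing the spectral decomposition $A(\alpha) = \lambda I_n + \sum_{k\geq 3}(\lambda_k(\alpha)-\lambda) v_k(\alpha) v_k(\alpha)^T$ and using $\Pi(\alpha) v_k(\alpha)=0$ for $k\geq 3$, one computes
\begin{equation*}
  \widetilde{A}^\rvec(\alpha) - \lambda I_2
  = \sum_{k\geq 3} \bigl(\lambda_k(\alpha)-\lambda\bigr)\,w_k(\rvec) w_k(\rvec)^T,
  \qquad w_k(\rvec) := V(\rvec)^T v_k(\alpha),
\end{equation*}
and observes that $|w_k(\rvec)|^2 = v_k(\alpha)^T \Pi(\rvec) v_k(\alpha) = |(\Pi(\rvec)-\Pi(\alpha)) v_k(\alpha)|^2 = O(|\rvec-\alpha|^2)$ by smoothness of $\Pi$. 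Hence $\widetilde{A}^\rvec(\alpha) - \lambda I_2 = O(|\rvec-\alpha|^2)$, and linearity of $F$ together with $F(\lambda I_2)=0$ delivers (ii). Assembling (i), (ii), and the Taylor expansion completes the proof.
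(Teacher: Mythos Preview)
Your argument is correct and takes a genuinely different route from the paper's proof. The paper introduces the smooth $2\times2$ block $B(\rvec)$ from \eqref{block_diagonalization} as an intermediary and proves two estimates: first, $|\sigma(B,\rvec)-\alpha|<C_1|\rvec-\alpha|^2$ by ordinary Newton--Raphson analysis (using the exact identity $F(B(\alpha))=\mathbf{0}$), and second, $|\sigma(B,\rvec)-\sigma(\widetilde{A}^\rvec,\rvec)|<C_2|\rvec-\alpha|^2$, obtained by noting that $F(B^\rvec(\rvec))=F(\widetilde{A}^\rvec(\rvec))$ exactly at the diagonalization point, so the two Newton steps differ only through the Jacobians, and Lemma~\ref{lem:jacobian_difference} makes that difference $O(\lambda_2-\lambda_1)$, which multiplied by $|F|=O(\lambda_2-\lambda_1)$ yields the square. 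The result then follows by the triangle inequality.

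You instead work directly with $\widetilde{A}^\rvec$ and isolate the single ``residual'' term $F(\widetilde{A}^\rvec(\alpha))$ that obstructs the standard Newton argument. Your estimate (ii), based on the identity $|w_k(\rvec)|^2 = |(\Pi(\rvec)-\Pi(\alpha))v_k(\alpha)|^2$ and smoothness of the Riesz projector, is an elegant way to see why this residual is second order even though the eigenvectors themselves are discontinuous at $\alpha$. This avoids the paper's comparison of two separate iterations and makes the source of the quadratic rate transparent: the defect in applying Newton--Raphson to the ``wrong'' target $F(\widetilde{A}^\rvec(\cdot))$ is precisely $F(\widetilde{A}^\rvec(\alpha))=O(|\rvec-\alpha|^2)$. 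For part~(i) you end up using the same ingredients as the paper (Lemmas~\ref{lem:basis_change_FJ} and~\ref{lem:jacobian_difference} together with Theorem~\ref{thm:Hess_disc}), so the two proofs share their treatment of the Jacobian but diverge in how they control the function value. The paper's approach is slightly more modular; yours is more direct and arguably more illuminating.
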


We remark that non-degeneracy of the conical point is a generic
property: any degenerate conical point can be made non-degenerate by a
small perturbation of the function $A$.

We recall that the superscript in
$\widetilde{A}^{\rvec}(\cdot)$ refers to the basis which is computed at the
point $\rvec$ and in which the matrix $A(x,y)$ is represented.  
The derivatives of $\widetilde{A}^{\rvec}(\cdot)$ that are taken to
compute $J_\rvec$ in \eqref{eq:sigma_def}, are also evaluated at the
point $\rvec$.  The result of evaluating
$\sigma(\widetilde{A}^{\rvec},\rvec)$ is explicitly written out in
equation~\eqref{iter_equation_simple}.

\begin{proof}
  Let $B$ be the matrix defined in equation
  (\ref{block_diagonalization}). We will see, in Lemmas \ref{sigma_B}
  and \ref{sigma_diff} below, that there is a neighborhood
  $\Omega\subset \mathbb{R}^2$ of the conical point $\alpha$, and constants
  $C_1, C_2 > 0$ such that for all $\rvec\in \Omega$ we have
  \begin{equation*}
    |\sigma(B, \rvec)-\alpha|<C_1|\rvec-\alpha|^2  
  \end{equation*}
  and
  \begin{equation*}
    |\sigma(B,\rvec)-\sigma(\widetilde{A}^{\rvec},\rvec)|<C_2|\rvec-\alpha|^2.  
  \end{equation*}
  Together, these give us
  \begin{equation*}
    |\sigma(\widetilde{A}^{\rvec},\rvec)-\alpha|<(C_1+C_2)|\rvec-\alpha|^2,    
  \end{equation*}
  as desired.
\end{proof}

Now we establish the lemmas used in the proof of
Theorem~\ref{main_thm_proofs}.

\begin{lemma}
  \label{sigma_B}
  There exists $\Omega_1\subset \mathbb{R}^2$ with $\alpha\in \Omega_1$ and
  $C_1\in \mathbb{R}$ such that
  \begin{equation}
    \label{eq:newton_B}
    |\sigma(B,\rvec)-\alpha|<C_1|\rvec-\alpha|^2,  
  \end{equation}
  when $\rvec\in \Omega_1$.
\end{lemma}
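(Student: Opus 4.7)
The plan is to recognize that $\sigma(B,\cdot)$ is precisely the classical Newton--Raphson iteration applied to the smooth map $g\colon \mathbb{R}^2 \to \mathbb{R}^2$ defined by $g(\rvec) := F\bigl(B(\rvec)\bigr)$. Since $B \in C^2(\Omega,\mathbb{R}^{2\times 2})$ came from the block-diagonalization \eqref{block_diagonalization} and $F$ is linear in the matrix entries, $g$ is $C^2$ with Jacobian $\Diff g(\rvec) = J_\rvec(B)$. Therefore $\sigma(B,\rvec) = \rvec - \Diff g(\rvec)^{-1}\,g(\rvec)$ is the standard Newton step for a root of $g$, and what remains is to check that the classical hypotheses hold and execute the usual quadratic-convergence estimate.

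To apply the classical theorem I would verify two properties at $\alpha$. First, $g(\alpha) = 0$: by construction $B(\alpha)$ is a scalar multiple of $I_2$, so $B_{11}(\alpha)=B_{22}(\alpha)$ and $B_{12}(\alpha)=0$. Second, $\Diff g(\alpha) = J_\alpha(B)$ is invertible: by Definition~\ref{def_conical_degeneracy} the Hessian of $\disc(B)$ at $\alpha$ is positive definite, and Theorem~\ref{thm:Hess_disc} identifies this Hessian with $2J_\alpha(B)^TJ_\alpha(B)$, which forces $\det J_\alpha(B) \neq 0$.

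The main estimate is then standard. I would write
\begin{equation*}
  \sigma(B,\rvec) - \alpha
  = J_\rvec(B)^{-1}\Bigl[J_\rvec(B)(\rvec-\alpha) - \bigl(g(\rvec) - g(\alpha)\bigr)\Bigr]
\end{equation*}
and use the integral form of Taylor's theorem to rewrite the bracket as
\begin{equation*}
  \int_0^1 \Bigl[\Diff g(\rvec) - \Diff g\bigl(\alpha + t(\rvec-\alpha)\bigr)\Bigr](\rvec-\alpha)\,dt.
\end{equation*}
Because $g$ is $C^2$, $\Diff g$ is locally Lipschitz near $\alpha$ with some constant $L$, so this integral is bounded in norm by $\tfrac{L}{2}|\rvec-\alpha|^2$. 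Continuity of $\Diff g$ together with invertibility at $\alpha$ lets me shrink to a neighborhood $\Omega_1$ of $\alpha$ on which $\|J_\rvec(B)^{-1}\| \leq M$ uniformly, and combining these yields \eqref{eq:newton_B} with $C_1 = ML/2$.

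I do not expect any real obstacle here: the only nontrivial structural input is the invertibility of $J_\alpha(B)$, which has been prepared by Theorem~\ref{thm:Hess_disc}, and everything else is the textbook Newton--Raphson argument applied to a $C^2$ map between equal-dimensional spaces. The genuinely delicate part of the overall Theorem~\ref{main_thm_proofs} is the companion Lemma~\ref{sigma_diff}, which must compare $\sigma(B,\rvec)$ to $\sigma(\widetilde{A}^{\rvec},\rvec)$ by controlling how much Jacobian and value differ when one replaces the true smooth block $B$ with its tangent-at-$\rvec$ approximation $\widetilde{A}^{\rvec}$; this will be the place where Lemma~\ref{lem:jacobian_difference} enters via the factor $(\lambda_2-\lambda_1) = O(|\rvec-\alpha|)$.
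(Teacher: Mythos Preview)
Your proposal is correct and follows essentially the same approach as the paper: both recognize that $\sigma(B,\cdot)$ is the classical Newton--Raphson step for the $C^2$ map $g(\rvec)=F(B(\rvec))$, verify $g(\alpha)=0$ and $\det J_\alpha(B)\neq 0$ (via the non-degeneracy assumption and Theorem~\ref{thm:Hess_disc}), and then invoke the standard quadratic-convergence estimate. The only cosmetic difference is that you spell out the integral-remainder form of Taylor's theorem while the paper writes the finite expansion with an $O(|\alpha-\rvec_0|^2)$ remainder and inverts $J_{\rvec_0}$ directly.
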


\begin{proof}
  This is the usual Newton--Raphson method applied to conical point
  search for the $2\times2$ matrix $B$.  For completeness we provide
  the proof.  For the function $F(\rvec) := F(B(\rvec))$, we have the
  Taylor expansion around the point $\rvec_0$ which is evaluated at
  the point $\alpha$,
  \begin{equation*}
    \textbf{0} = F(\alpha) = F(\rvec_0)
    + \Diff_{\rvec_0} F \cdot (\alpha-\rvec_0)
    + O(|\alpha-\rvec_0|^2),
  \end{equation*}
  where the constant in $O(|\alpha-\rvec_0|^2)$ is \emph{independent}
  of $\rvec_0$ as long as it is in a neighborhood
  $\widetilde{\Omega}_1$ of $\alpha$.  The dot denotes the
  matrix-by-vector multiplication (to distinguish it from the argument
  of the function $F$).

  By assumption $\det(J_{\alpha}) \ne 0$, and, by smoothness, we know
  that $\Diff_{\rvec_0} F = J_{\rvec_0}$ is boundedly invertible in some region
  $\Omega_1 \subset \widetilde{\Omega}_1$ containing $\alpha$.
  Therefore, for the point $\rvec_1 = \sigma(B,\rvec_0)$, or equivalently,
  \begin{equation*}
    J_{\rvec_0} \cdot (\rvec_1-\rvec_0) = -F(\rvec_0),    
  \end{equation*}
  we have
  \begin{equation*}
    \textbf{0} 
    = J_{\rvec_0}\cdot (\alpha-\rvec_1) + O(|\alpha-\rvec_0|^2),
  \end{equation*}
  with the estimate \eqref{eq:newton_B} following by inverting
  $J_{\rvec_0}$.
\end{proof}

\begin{lemma}
  \label{invariance_of_iteration}
  For any $B\in C^2(\mathbb{R}^2, \mathbb{R}^{n\times n})$ and
  constant, orthogonal $U$, we have
  \begin{equation}
    \label{eq:invariance_of_iteration}
    \sigma(B, \rvec)=\sigma(U^TBU, \rvec).    
  \end{equation}
\end{lemma}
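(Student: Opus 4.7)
The plan is to reduce this identity to Lemma~\ref{lem:basis_change_FJ}, which already packages the key covariance of $F$ and $J$ under orthogonal conjugation. Unwinding the definition~\eqref{eq:sigma_def}, the claim $\sigma(B,\rvec)=\sigma(U^TBU,\rvec)$ is equivalent to the single identity
$$J_\rvec(U^TBU)^{-1}\,F(U^TBU) \;=\; J_\rvec(B)^{-1}\,F(B),$$
so everything boils down to showing that the conjugation $B\mapsto U^TBU$ affects $F$ and $J_\rvec$ by the \emph{same} orthogonal factor.

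This is precisely the content of Lemma~\ref{lem:basis_change_FJ}: for the given constant orthogonal $U$ there exists a constant orthogonal $W$ such that, simultaneously,
$$F(U^TBU) \;=\; W\,F(B), \qquad J_\rvec(U^TBU) \;=\; W\,J_\rvec(B),$$
for every $\rvec$. The consistency of these two identities is the whole point of Lemma~\ref{lem:basis_change_FJ}: the same $W$ appears in both, because $J_\rvec$ is obtained from $F$ by differentiation with respect to $\rvec$ and $W$ does not depend on $\rvec$. Substituting into the display above, the $W$ in the Jacobian factor is cancelled by the $W^{-1}$ produced on inversion, and what remains equals $J_\rvec(B)^{-1}F(B)$, as required.

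The only thing worth a line of commentary is domain of definition: $\sigma(\cdot,\rvec)$ only makes sense where the relevant Jacobian is invertible, but the second identity in Lemma~\ref{lem:basis_change_FJ} gives $|\det J_\rvec(U^TBU)| = |\det J_\rvec(B)|$, so $\sigma(B,\rvec)$ and $\sigma(U^TBU,\rvec)$ are defined on exactly the same set of $\rvec$. I do not anticipate any genuine obstacle in this argument; it is essentially a bookkeeping lemma that isolates the covariance already proved for $F$ and $J_\rvec$ and translates it into invariance of the Newton-type update $\sigma$.
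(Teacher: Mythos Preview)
Your argument is correct and follows exactly the route the paper takes: the paper's proof is a single sentence saying the identity follows from the definition of $\sigma$ together with Lemma~\ref{lem:basis_change_FJ}, and you have simply unpacked that sentence by exhibiting the cancellation of $W$ in $J_\rvec^{-1}F$. Your added remark on the domain of definition (matching invertibility of the two Jacobians) is a welcome clarification that the paper leaves implicit.
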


\begin{proof}
  Equation~\eqref{eq:invariance_of_iteration} follows directly from
  the definition of the one-step iteration function $\sigma$ and
  Lemma~\ref{lem:basis_change_FJ}.
\end{proof}

\begin{lemma}
  \label{sigma_diff}
  There exists $\Omega_2\subset \mathbb{R}^2$ with $\alpha\in \Omega_2$ and
  $C_2\in \mathbb{R}$ such that
  \begin{equation}
    \label{eq:sigma_diff}
    |\sigma(B,\rvec)
    - \sigma(\widetilde{A}^{\rvec},\rvec)|
    < C_2|\rvec-\alpha|^2,
  \end{equation}
  when $\rvec\in \Omega_2$.
\end{lemma}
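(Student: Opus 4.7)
The plan is to exploit the fact that both $B^\rvec(\rvec)$ and $\widetilde{A}^\rvec(\rvec)$ are diagonal with the same entries $\lambda_{1,2}(\rvec)$, so that the target function $F$ evaluates to the same vector for both. Consequently, the difference $\sigma(B, \rvec) - \sigma(\widetilde{A}^\rvec, \rvec)$ reduces entirely to a difference of inverse Jacobians acting on a common vector of size $|\lambda_1 - \lambda_2|$. Lemma~\ref{lem:jacobian_difference} will control the Jacobian difference by another factor of $|\lambda_1 - \lambda_2|$, and the non-degenerate conical point hypothesis, via Theorem~\ref{thm:Hess_disc}, will finally give $|\lambda_1 - \lambda_2|^2 = \disc(B(\rvec)) = O(|\rvec - \alpha|^2)$.

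Concretely, I would first invoke Lemma~\ref{invariance_of_iteration} with a constant orthogonal matrix $W$ diagonalizing $B(\rvec)$ to replace $\sigma(B, \rvec)$ by $\sigma(B^\rvec, \rvec)$, ensuring $F(B^\rvec(\rvec)) = (\lambda_1 - \lambda_2, 0)^T = F(\widetilde{A}^\rvec(\rvec))$. The matrix identity $P^{-1} - Q^{-1} = P^{-1}(Q - P) Q^{-1}$ then yields
\begin{equation*}
\sigma(B^\rvec, \rvec) - \sigma(\widetilde{A}^\rvec, \rvec)
= J_\rvec(\widetilde{A}^\rvec)^{-1}\bigl[J_\rvec(B^\rvec) - J_\rvec(\widetilde{A}^\rvec)\bigr] J_\rvec(B^\rvec)^{-1} F(\widetilde{A}^\rvec(\rvec)).
\end{equation*}
Using the remark that $J_\rvec(\widetilde{A}^\rvec) = J_\rvec(A^\rvec)$ and applying Lemma~\ref{lem:jacobian_difference} with $\rvec_0 = \rvec$, the bracketed factor equals $2(\lambda_2 - \lambda_1) M_\gamma(\rvec)$, where $M_\gamma$ is built from first derivatives of the $C^2$ vectors $\gamma_{1,2}$ and is therefore bounded near $\alpha$. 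Multiplying the three small factors produces a bound of order $|\lambda_1 - \lambda_2|^2 = \disc(B(\rvec))$. Positive-definiteness of $\Hess_\alpha \disc(B)$, together with the fact that $\alpha$ is a critical zero of $\disc(B)$, then gives $\disc(B(\rvec)) = O(|\rvec - \alpha|^2)$ by Taylor expansion.

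The main obstacle is obtaining uniform bounds on $\|J_\rvec(\widetilde{A}^\rvec)^{-1}\|$ and $\|J_\rvec(B^\rvec)^{-1}\|$ on a neighborhood of $\alpha$, which is delicate because the eigenvector matrix $V(\rvec)$ is generically discontinuous at the conical point and hence so is $J_\rvec(\widetilde{A}^\rvec)$. I plan to route around this by working through the smooth block diagonalization: $J_\rvec(B)$ is continuous in $\rvec$ (since $B \in C^2$) and equals the invertible $J_\alpha(A^\alpha)$ at $\alpha$ (by Lemma~\ref{lem:jacobian_difference}), so $\|J_\rvec(B)^{-1}\|$ is locally bounded. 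Lemma~\ref{lem:basis_change_FJ} transfers this bound to $\|J_\rvec(B^\rvec)^{-1}\|$ since conjugation by a constant orthogonal $W$ multiplies the Jacobian by an orthogonal factor. Because the identity $J_\rvec(\widetilde{A}^\rvec) = J_\rvec(B^\rvec) - 2(\lambda_2 - \lambda_1) M_\gamma$ exhibits $J_\rvec(\widetilde{A}^\rvec)$ as a perturbation of $J_\rvec(B^\rvec)$ that tends to $0$ as $\rvec \to \alpha$, a Neumann series argument yields invertibility and a uniform bound for $J_\rvec(\widetilde{A}^\rvec)^{-1}$ on a possibly smaller neighborhood $\Omega_2$, on which all the estimates above combine to give the desired inequality.
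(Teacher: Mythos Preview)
Your proposal is correct and follows essentially the same route as the paper's proof: reduce to $\sigma(B^\rvec,\rvec)$ via Lemma~\ref{invariance_of_iteration}, use that $F(B^\rvec(\rvec))=F(\widetilde{A}^\rvec(\rvec))=(\lambda_1-\lambda_2,0)^T$, apply the resolvent-type identity $P^{-1}-Q^{-1}=P^{-1}(Q-P)Q^{-1}$ together with Lemma~\ref{lem:jacobian_difference} to extract a factor $(\lambda_2-\lambda_1)$ from the Jacobian difference, and conclude $O((\lambda_1-\lambda_2)^2)=O(|\rvec-\alpha|^2)$. Your treatment of the uniform invertibility of $J_\rvec(\widetilde{A}^\rvec)$ (via continuity of $J_\rvec(B)$, transfer through Lemma~\ref{lem:basis_change_FJ}, and a Neumann-series perturbation) is in fact more explicit than the paper's, which simply asserts bounded invertibility from the same ingredients.
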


\begin{proof}
  By the assumption that $\alpha$ is a non-degenerate conical point
  and equation~\eqref{eq:Hess_disc}, we have that $J_\rvec(B)$ and
  therefore $J_\rvec(B^{\rvec})$ has a bounded inverse in a region
  around $\alpha$.  By equation \eqref{jacobian_difference} we
  conclude that $J_\rvec(\widetilde{A}^{\rvec})$ also has a bounded
  inverse in some region $\Omega_2$ around $\alpha$ where
  $\lambda_1-\lambda_2$ is small. We can express the difference of the
  inverses as
  \begin{align*}
    J_\rvec(B^{\rvec})^{-1}-J_\rvec(\widetilde{A}^{\rvec})^{-1}
    &= J_\rvec(B^{\rvec})^{-1}
      \left(J_\rvec(\widetilde{A}^{\rvec}) -
      J_\rvec(B^{\rvec})\right)
      J_\rvec(\widetilde{A}^{\rvec})^{-1} \\
    &= (\lambda_1-\lambda_2)
      J_\rvec(B^{\rvec})^{-1}
      \begin{pmatrix}
        0 & 0\\
        \big\langle \spdev{x}{\gamma_1},
        \gamma_2\big\rangle & \big\langle
        \spdev{y}{\gamma_1}, \gamma_2\big\rangle
      \end{pmatrix}
          J_\rvec(\widetilde{A}^{\rvec})^{-1}.      
  \end{align*}
  and so, using boundedness of $\Gamma$ and its derivatives, we get
  \begin{equation*}
    \left\|J_\rvec(B^{\rvec})^{-1}
      - J_\rvec(\widetilde{A}^{\rvec})^{-1}\right\|
    = O(\lambda_1-\lambda_2).
  \end{equation*}
  We also recall that by definition of $A^\rvec$ and $B^\rvec$, 
  \begin{equation*}
    F(B^\rvec) = 
    F(\widetilde{A}^{\rvec}) =
    \begin{pmatrix}
      \lambda_1(\rvec) - \lambda_2(\rvec) \\
      0
    \end{pmatrix}.
  \end{equation*}
  
  Finally, abbreviating $J=J_{\rvec}$, we estimate
  \begin{align*}
    \left|\sigma(B^{\rvec},\rvec)
    -\sigma(\widetilde{A}^{\rvec},\rvec)\right|
    &= \left|J(B^{\rvec})^{-1}F(B^{\rvec})
      -J(\widetilde{A}^{\rvec})^{-1}F(\widetilde{A}^{\rvec})\right|\\
    &= \left|\left(J(B^{\rvec})^{-1}
      -J(\widetilde{A}^{\rvec})^{-1}\right)F(\widetilde{A}^{\rvec})\right|\\
    &\le \left\|J(B^{\rvec})^{-1}
      -J(\widetilde{A}^{\rvec})^{-1}\right\|\,
      \left|F(\widetilde{A}^{\rvec})\right|\\
    &= O\left((\lambda_2-\lambda_1)^2\right)
      = O\left(|\rvec-\alpha|^2\right).
  \end{align*}
  Equation~\eqref{eq:sigma_diff} now follows by applying
  Lemma~\ref{invariance_of_iteration} to get
  $\sigma(B^{\rvec},\rvec) = \sigma(B,\rvec)$.
\end{proof}

\section{Examples}
\label{sec:examples}

\subsection{Elements of A are linear in parameters}
If $A$ is linear in each parameter, we have $A = \Lambda I + xA_x +
yA_y = \Lambda I + \alpha I+\beta \sigma_1 + \gamma \sigma_3$, where
\begin{equation*}
  \sigma_1 = \begin{pmatrix}0&1 \\1&0\end{pmatrix}
  \qquad\mbox{and}\qquad
  \sigma_3 = \begin{pmatrix}1&0 \\0&-1\end{pmatrix},
\end{equation*}
for some $\alpha$, $\beta$ that depend on $x, y$, and $A$. The eigenvalues of this matrix are values of $\lambda$ where
\[\det(A-\lambda I) = \det(\Lambda I + \alpha I+\beta \sigma_1 +
  \gamma \sigma_3-\lambda I) = 0\]
\[(\Lambda+\alpha-\lambda)^2 = \beta^2+\gamma^2\]  
\[\lambda = \Lambda + \alpha \pm \sqrt{\beta^2+\gamma^2}\]
which is a cone in the new parameter space. In fact, a simple calculation shows that the degeneracy of the function $\hat{A}(\alpha, \beta) = \begin{pmatrix} \beta & \gamma \\ \gamma & -\beta\end{pmatrix}$, which has the same eigenvectors and shifted eigenvalues, can be located using a single step of iteration~\eqref{iter_equation_simple}.

\subsection{Non-linear examples}

Consider the following matrix-function example,
\begin{equation}
  \label{eq:real_convergence}
  A(x,y) = 
  \begin{pmatrix}
    2\cos(x)&0&0&1\\
    0&0.5+\cos(y)&0&1\\
    0&0&1&1\\
    1&1&1&1
  \end{pmatrix}.  
\end{equation}
Since $A(x,y)$ is a rank-one perturbation of a diagonal matrix, it can
be shown that there is a double eigenvalue $1$ at the point given by
\begin{equation*}
  2\cos(x) = 0.5 + \cos(y) = 1,
\end{equation*}
or $x=y=\pi/3$.  The results of running the algorithm of
Theorem~\ref{thm:main} with random starting points in the rectangle
$(\frac\pi3, \frac\pi3) \pm \frac12$ is shown in 
Figure~\ref{fig:real_convergence}.

\begin{figure}
  \centering
  \subfloat[]{\label{fig:real_convergence}
    \includegraphics[width=0.4\textwidth]{./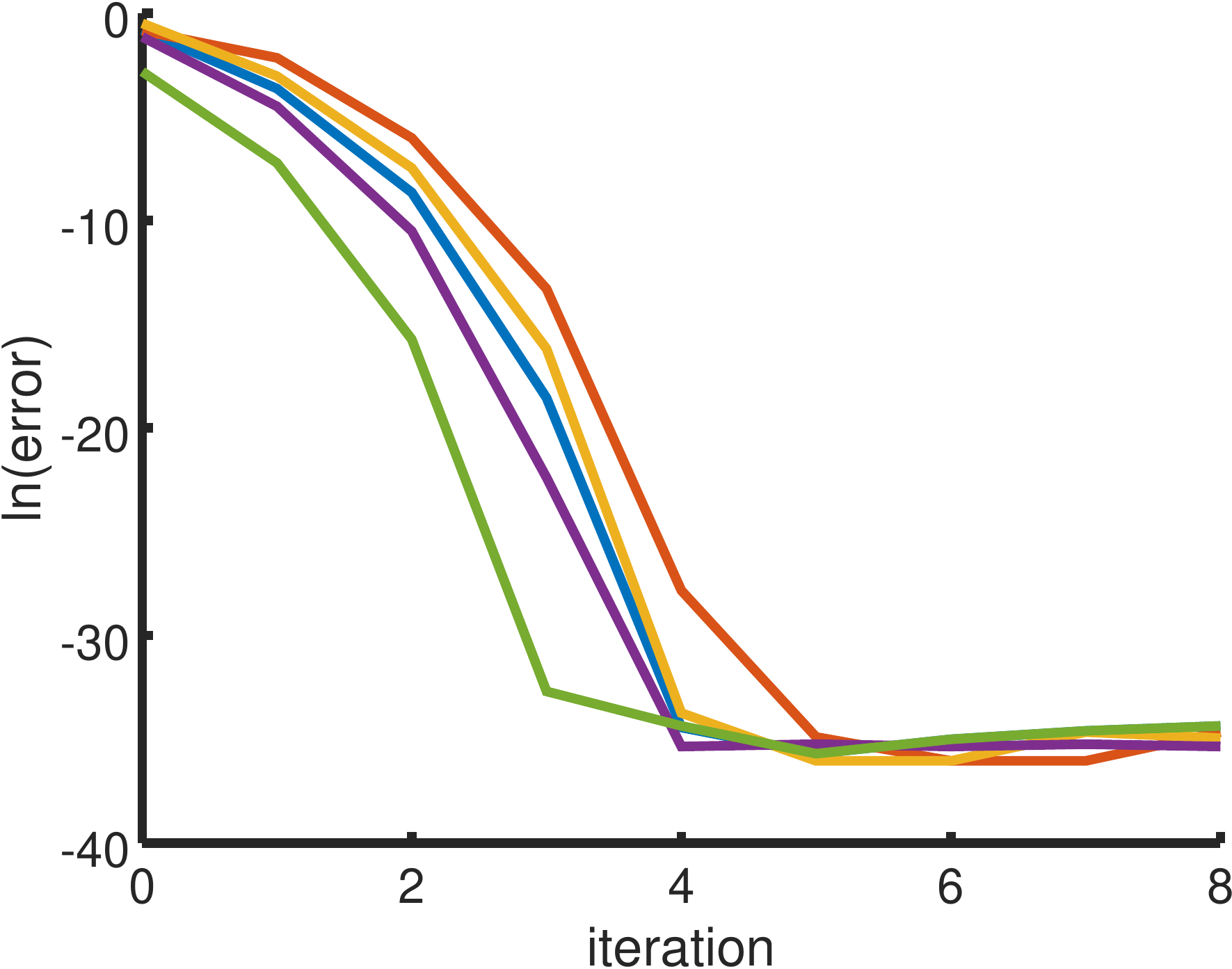}} 
  \subfloat[]{\label{fig:complex_convergence}
    \includegraphics[width=0.4\textwidth]{./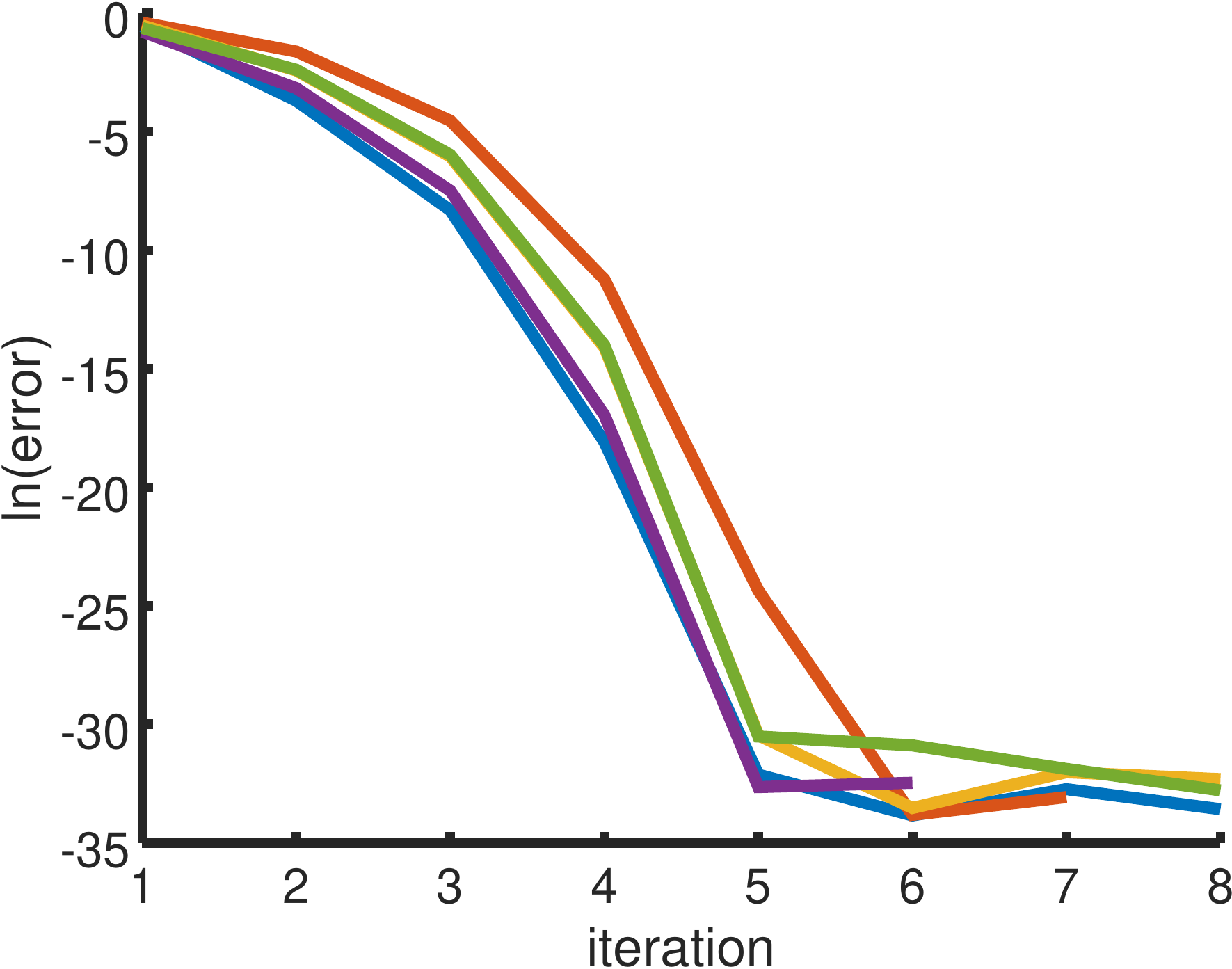}}\\
  \caption{(A) Logarithm of the distance from the $i$-th iteration
    $\rvec_i$ to the conical point $(\frac\pi3,\frac\pi3)$ of $A(x,y)$
    from equation (\ref{eq:real_convergence}), plotted as a function
    of $i$; the algorithm saturates at the limit of numerical
    precision in 3-5 steps.  (B) Logarithm of
    $|\rvec_{i+1}-\rvec_{i}|$ where $\rvec_i$ is the $i$-th iteration
    of the algorithm applied to $A(x, y, z)$ given by equation
    (\ref{eq:complex_convergence}). Several independent runs are
    plotted, each beginning at a random point in $[-\pi, \pi]$.}
\end{figure}

The complex Hermitian
case described in Section \ref{sec:complex} is demonstrated in Figure
\ref{fig:complex_convergence}.  The matrix
\begin{equation}
  \label{eq:complex_convergence}
  A =
  \begin{pmatrix}
    1&1&0&0&0&0&0&0&0&z\\
    1&3&e^{ix}&1&0&0&0&0&0&0\\
    0&e^{-ix}&2&1&0&0&0&0&0&0\\
    0&1&1&3&1&0&0&0&0&0\\
    0&0&0&1&3&1&1&0&0&0\\
    0&0&0&0&1&3&0&0&0&0\\
    0&0&0&0&1&0&3&1&1&0\\
    0&0&0&0&0&0&1&2&e^{iy}&0\\
    0&0&0&0&0&0&1&e^{-iy}&3&1\\
    z&0&0&0&0&0&0&0&1&1
  \end{pmatrix}.  
\end{equation}
corresponds to the discrete Laplacian of the graph shown in
Figure~\ref{fig:graph_Laplacian} with dashed edges carrying a magnetic
potential ($x$ and $y$ correspondingly).  The parameter $z$ is
introduced artificially, and the conical point found numerically has
value $z=0$.  Since the location of the conical point is not known
analytically, the error is estimated using the norms of updates
$\left\|\rvec_{i}-\rvec_{i+1}\right\|$ instead of
$\left\|\rvec_{i}-\alpha\right\|$.  The result of several runs of the
algorithm is shown in Figure~\ref{fig:complex_convergence}.

\begin{figure}
\centering
\begin{tikzpicture}
\draw[fill=black] (0,0) circle (3pt);
\draw[fill=black] (-2,0) circle (3pt);
\draw[fill=black] (2,0) circle (3pt);
\draw[fill=black] (0,2) circle (3pt);
\draw[fill=black] (4,0) circle (3pt);
\draw[fill=black] (4,2) circle (3pt);
\draw[fill=black] (6,0) circle (3pt);
\draw[fill=black] (8,2) circle (3pt);
\draw[fill=black] (8,0) circle (3pt);
\draw[fill=black] (10,0) circle (3pt);
\node at (0,-0.5) {2};
\node at (-2,-0.5) {1};
\node at (2,-0.5) {4};
\node at (0,2.5) {3};
\node at (4,-0.5) {5};
\node at (4,2.5) {6};
\node at (6,-0.5) {7};
\node at (8,2.5) {8};
\node at (8,-0.5) {9};
\node at (10,-0.5) {10};
\draw[thick] (-2,0) -- (0,0) -- (2,0) -- (0,2);
\draw[thick] (2,0) -- (4,0) -- (4,2);
\draw[thick] (4,0) -- (6,0) -- (8,2);
\draw[thick] (6,0) -- (8,0) -- (10,0);
\draw[dashed,thick] (0,0) -- (0,2);
\draw[dashed,thick] (8,2) -- (8,0);
\end{tikzpicture}
\caption{Graph corresponding to equation
  \eqref{eq:complex_convergence}.}
\label{fig:graph_Laplacian}
\end{figure}
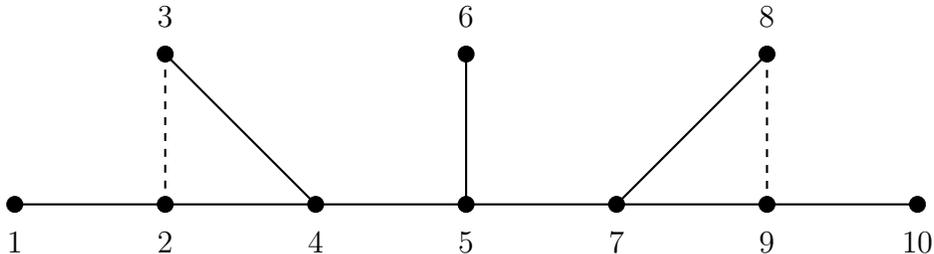

\subsection{Avoided crossing}
\label{sec:avoided}

While a non-degenerate conical point is stable under small
perturbations of the real symmetric matrix-function $A(x,y)$, the eigenvalue
multiplicity may be lifted by an addition of a small complex
perturbation.  It is instructive to investigate the run results of our
algorithm in this case.

\begin{figure}
  \centering
  \includegraphics[width=0.4\textwidth]{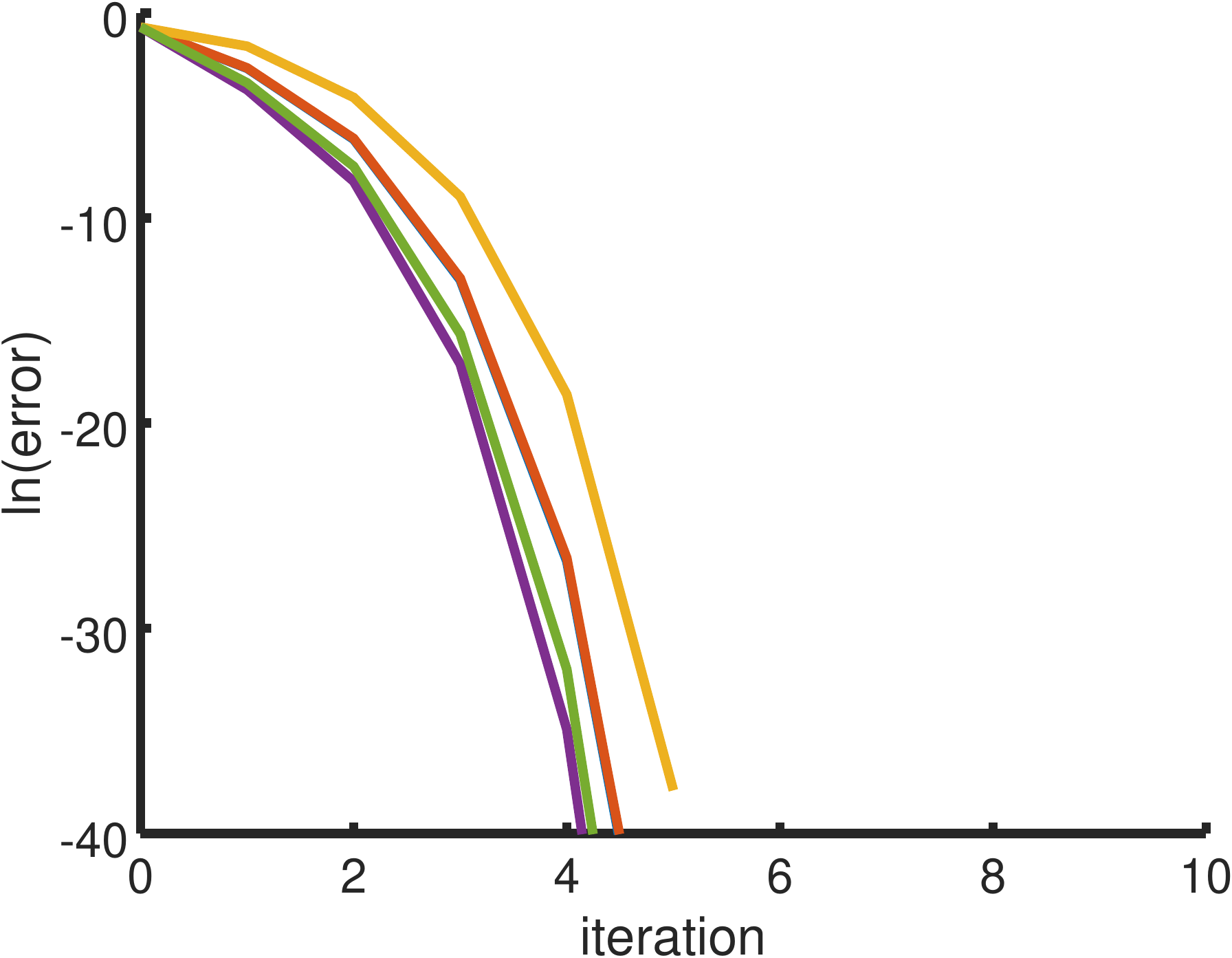}
  \quad
  \includegraphics[width=0.4\textwidth]{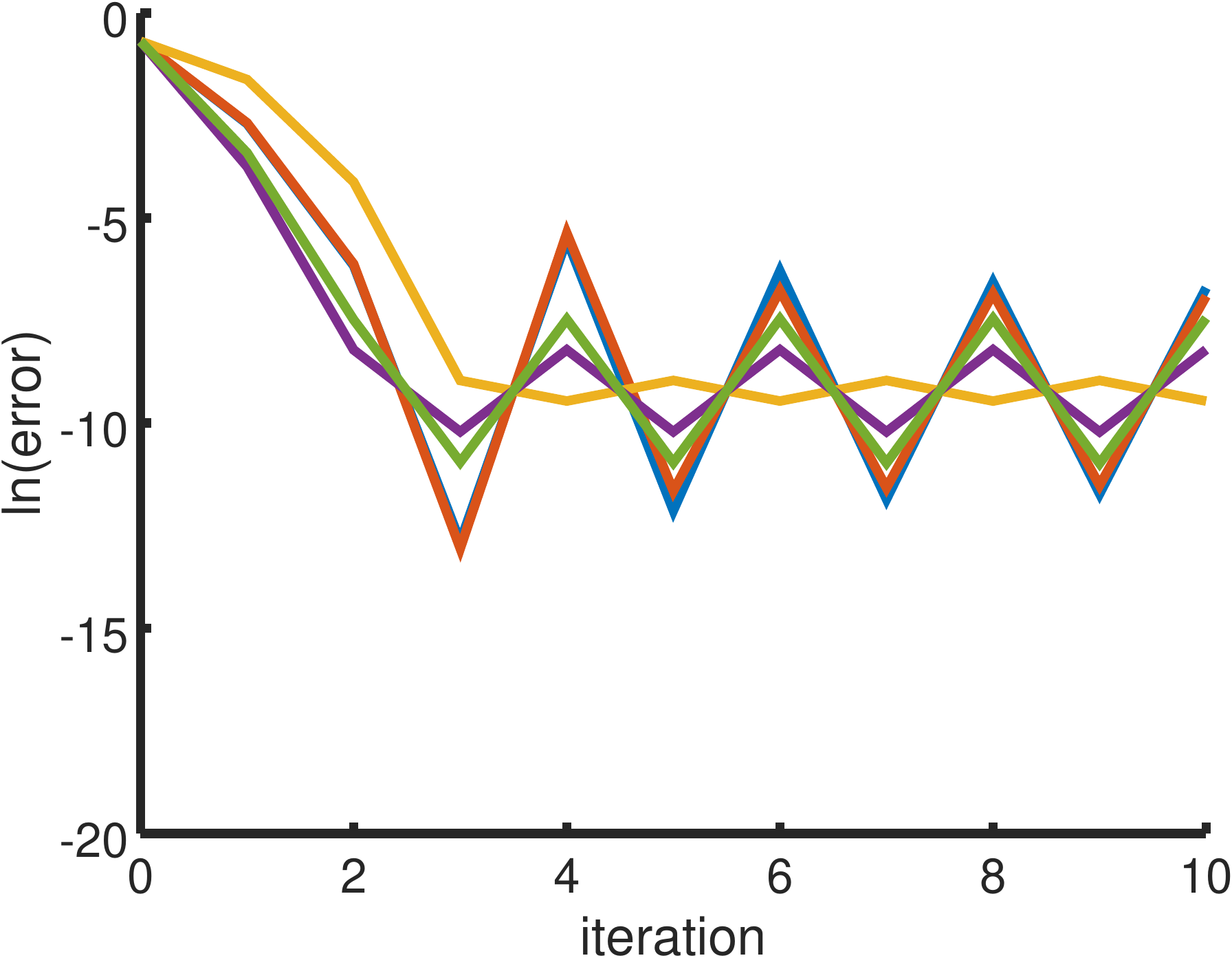}  
  \caption{Logarithm of distance to $(0,0)$ as a function of the
    iteration step for several runs of the algorithm for $A(x,y)$
    given by equation~\eqref{eq:avoided} with $\epsilon=0$ (left) and
    with $\epsilon=10^{-4}$ (right), i.e.\ an avoided crossing.  Note
    the difference in vertical scales. Runs are initialized with
    random points on the circle of radius $1/2$ around $(0,0)$.}
  \label{fig:avoided}
\end{figure}

Consider the matrix-function
\begin{equation}
  \label{eq:avoided}
  A =
  \begin{pmatrix}
    x+3\sin(y) & y+\epsilon i\\
    y-\epsilon i & -x-x^2
  \end{pmatrix}.
\end{equation}
It has a conical point at $(0,0)$ when $\epsilon=0$ and no eigenvalue
multiplicities when $\epsilon\neq 0$.  We plot in
Figure~\ref{fig:avoided} the results of several runs with $\epsilon=0$
(left) and with $\epsilon = 10^{-4}$ (right).  For $\epsilon=0$ the
algorithm converges quadratically, as in the previous examples.  For
$\epsilon\neq0$, the algorithm initially approaches the position of
the former conical point, but gets repelled, resulting in
oscillations.  Conversely, such oscillations (within the limits of
numerical precision) should be considered a tell-tale sign of
eigenvalue surfaces nearly but not exactly touching.

We remark that for $\epsilon \neq 0$, the square eigenvalue difference
$(\lambda_1 - \lambda_2)^2$ has the minimal value of order
$\epsilon^2$.  If one is using optimization of
$(\lambda_1 - \lambda_2)^2$ to find the multiplicity location, it
would be difficult to tell apart genuine points of multiplicty from
avoided crossings.  This observation is investigated further in the
next example.

\subsection{Merging Dirac points}
\label{sec:mergingDirac}

In condensed matter physics literature, the conical points in the
dispersion relation of a periodic structure are know as the ``Dirac
points'', because the effective equation of the wave propagation at
the corresponding energy is of Dirac type (see \cite{FefWei_cmp14} for
a mathematical formulation of this physics result).  When the
material parameters change, the Dirac point may undergo a fold
bifurcation, where two points collide and annihilate.  The physical
consequence of this collision were studied, for example, in
\cite{Mon+_prb09}; an experimental observation in a tunable honeycomb
lattice was reported in \cite{Tar+_n12}.  In this section we use the
basic model from \cite{Mon+_prb09},
\begin{equation}
  \label{eq:Agraphene}
  A(x,y) :=
  \begin{pmatrix}
    0 & -1 - \frac12 e^{ix} - p e^{iy}   \\
    -1 - \frac12 e^{-ix} - p e^{-iy} & 0
  \end{pmatrix},
\end{equation}
where the bifurcation occurs at $p=\frac12$: for $p>\frac12$ there are
two Dirac points and for $p<\frac12$ there are none, see
Fig.~\ref{fig:mergingDirac_surf}. 

Despite $A(x,y)$ being a complex matrix, the problem of locating Dirac
points in this setting is analogous to the real symmetric case due to
presence of the inversion symmetry $\overline{A(-x, -y)} = A(x,y)$.
The correct target function $F$ (cf.\ \eqref{eq:target_function_def}
and \eqref{eq:objective_complex_ri}) is
\begin{equation*}
  F(A^\pvec) :=
  \begin{pmatrix}
    A^{\pvec}_{11}-A^{\pvec}_{22}\\
    2\Imag(A^{\pvec}_{12})
  \end{pmatrix}.
\end{equation*}

\begin{figure}
  \centering
  \includegraphics[scale=0.5]{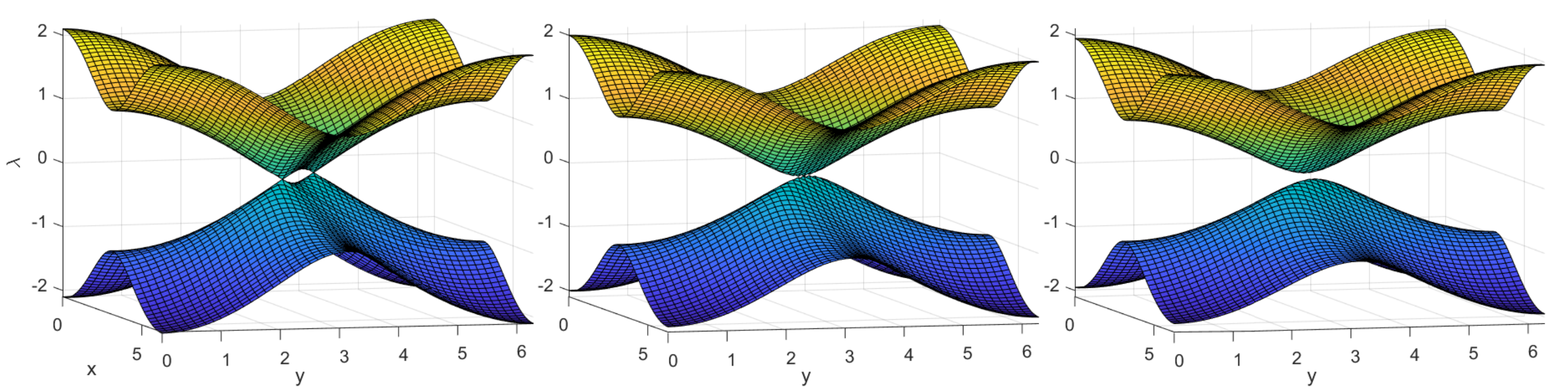}
  \caption{Two Dirac points (left), colliding (center) and disappearing
    (right), in the dispersion relation of \eqref{eq:Agraphene} with
    parameter $p$ values $0.6$, $0.5$ and $0.45$ correspondingly.}
  \label{fig:mergingDirac_surf}
\end{figure}

\begin{figure}
  \centering
  \includegraphics[scale=0.3]{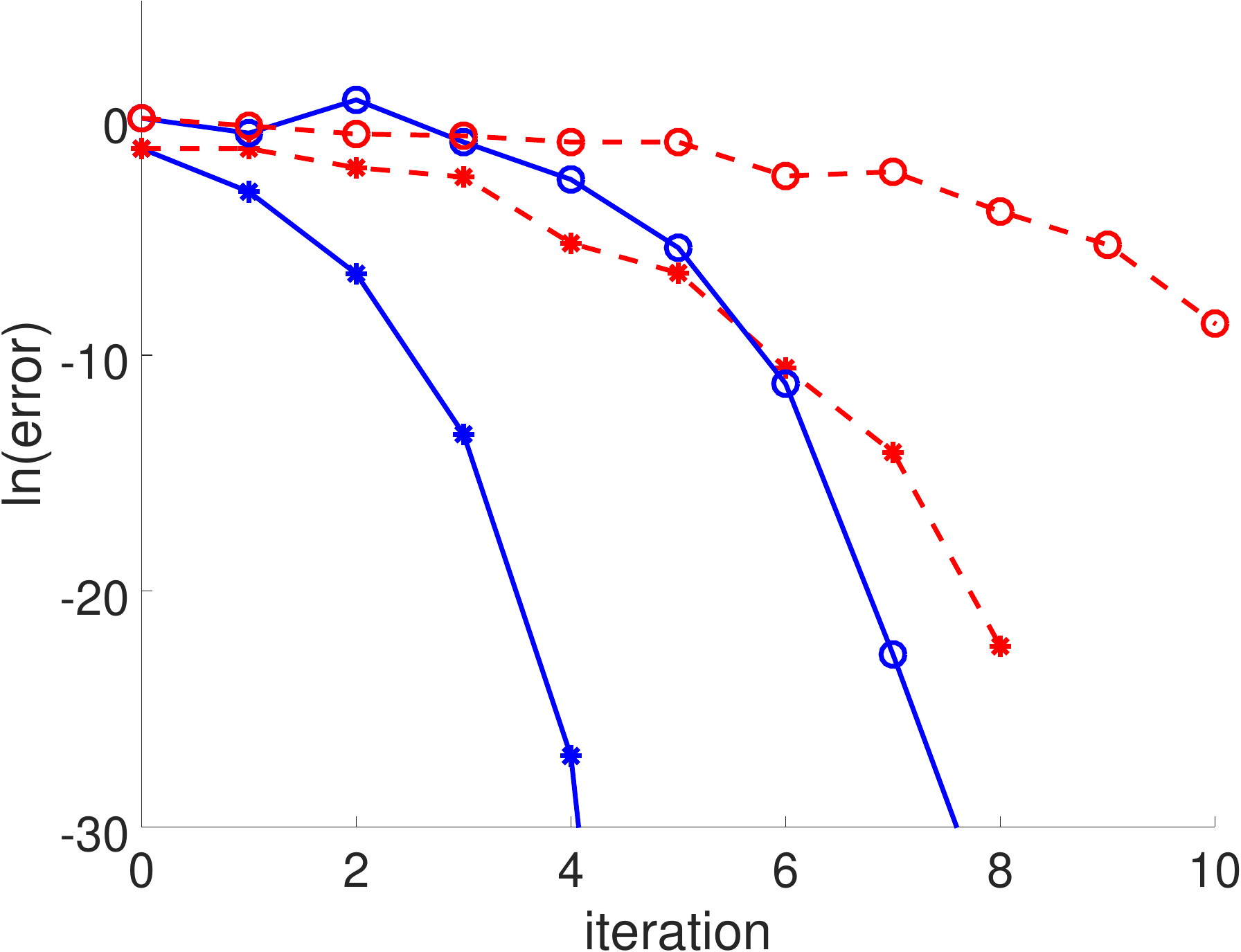}
  \includegraphics[scale=0.3]{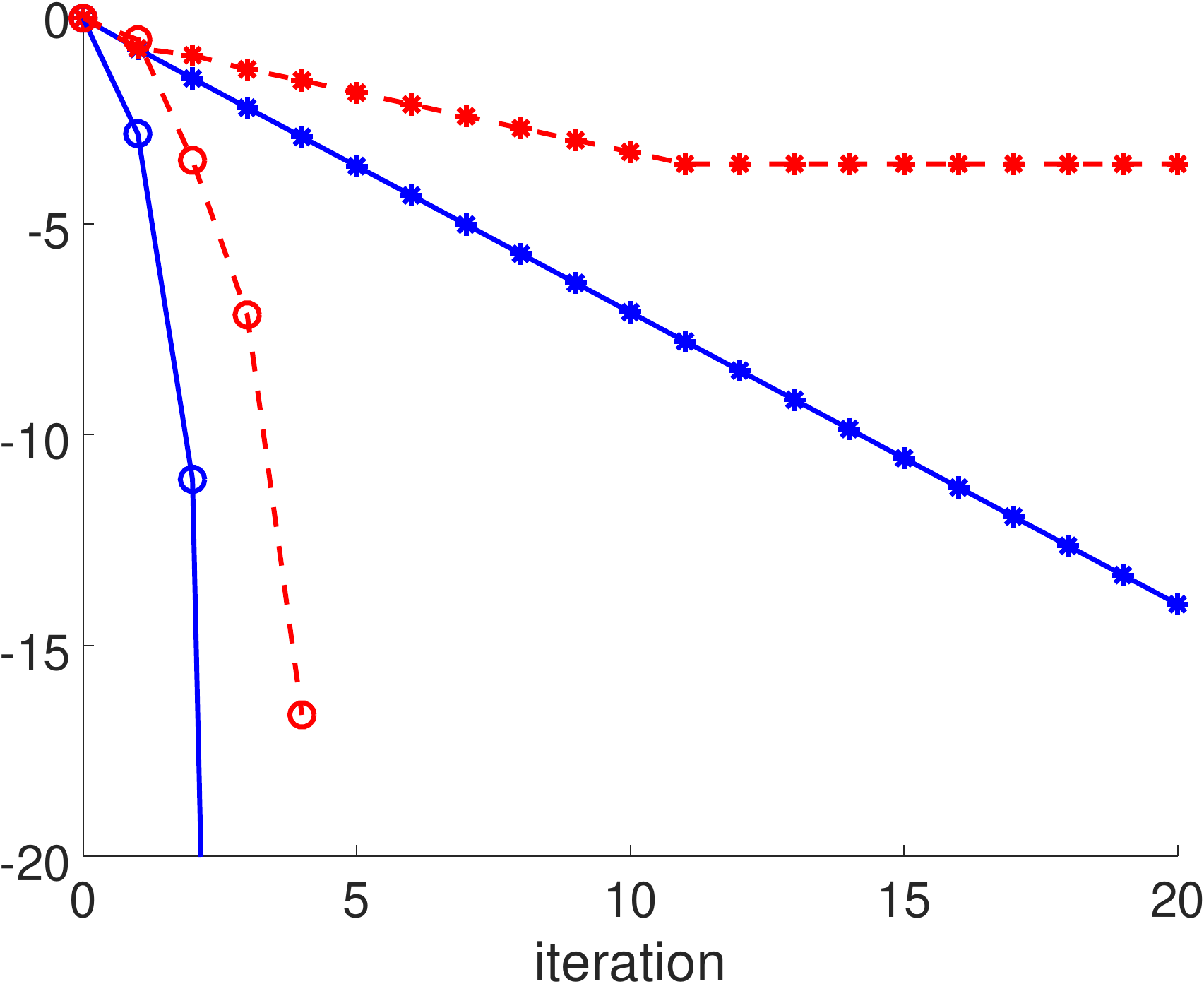}
  \includegraphics[scale=0.3]{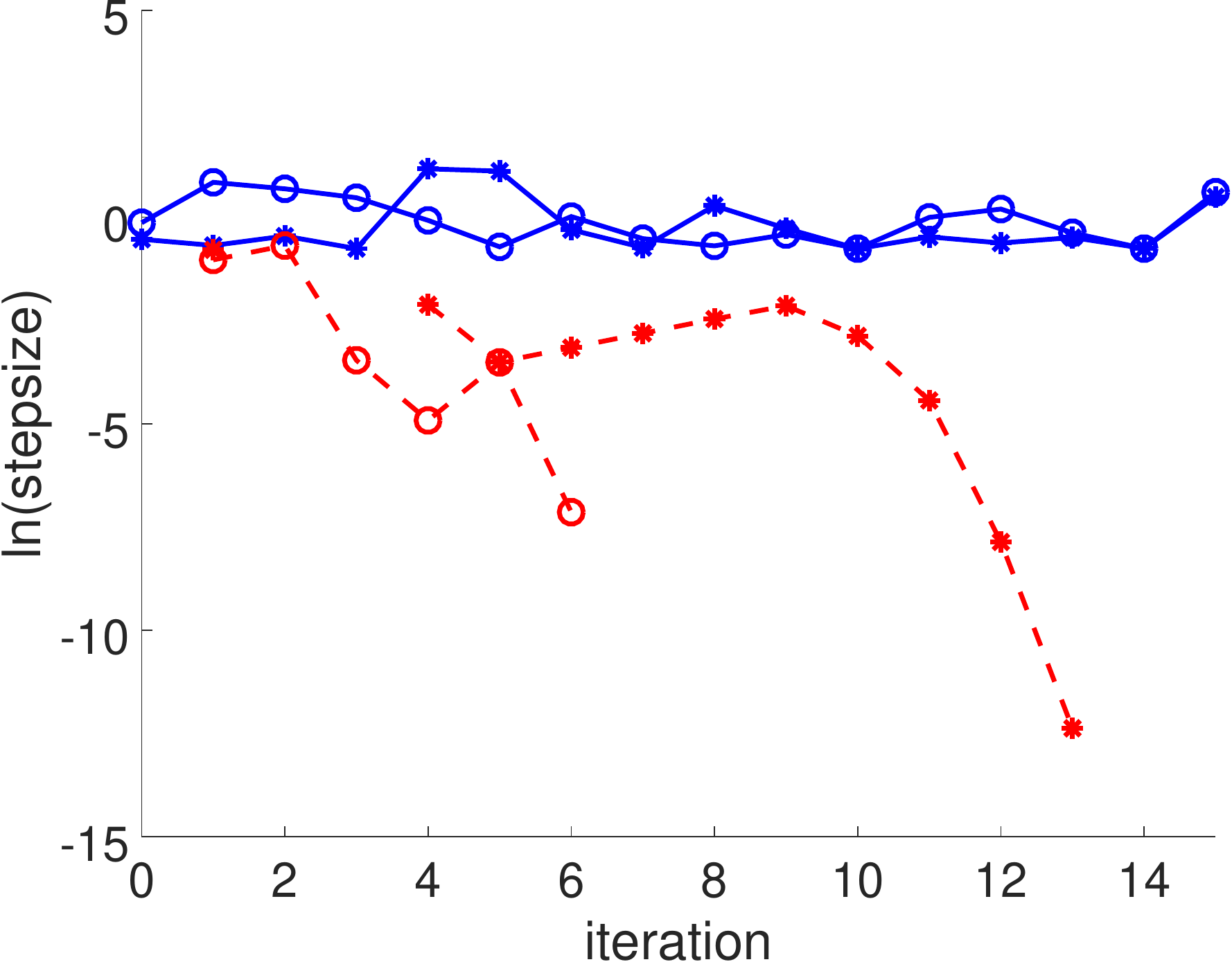}
  \caption{Convergence of iterations for matrix family
    \eqref{eq:Agraphene}: applying Theorem~\ref{thm:main} (blue,
    solid) versus quasi-Newton minimization of
    $(\lambda_1-\lambda_2)^2$ (red, dashed).  The parameter $p$ is
    $0.6$ (left), $0.5$ (center) and $0.45$ (right).  Two starting
    points used in each figure, $(0.8\pi,0.8\pi)$ (empty circles) and
    $(0.8\pi, 1.2\pi)$ (stars).}
  \label{fig:f_mergingDirac}
\end{figure}

In Figure~\ref{fig:f_mergingDirac} we present a comparison between the
convergence of iterations of Theorem~\ref{thm:main} and a standard
quasi-Newton search for the minimum of
$g(x,y) = (\lambda_1 - \lambda_2)^2$.
Figure~\ref{fig:f_mergingDirac}(left) is for $p=0.6$ where the
convergence of both methods is quadratic, although
Theorem~\ref{thm:main} is faster.
Figure~\ref{fig:f_mergingDirac}(center) is for $p=0.5$, where the
multiplicity point is \emph{degenerate}.  While Theorem~\ref{thm:main}
is no longer applicable, the iteration still converges when the matrix
pseudoinverse is used in \eqref{iter_equation_simple}.  The speed of
iteration is highly dependent on the direction, presumably because the
cross-section of the eigenvalue surface is parabolic in one direction
and conical in the other.  Again, the algorithm of
Theorem~\ref{thm:main} converges faster, while quasi-Newton iteration
fails altogether for the second initial point.

Finally, in Figure~\ref{fig:f_mergingDirac}(right), the $Y$-axis shows
the logarithm of the last taken step, since the distance to the
conical point is undefined: there is no conical point.  While the
quasi-Newton iteration converges, correctly, to the minimum of
$(\lambda_1-\lambda_2)^2$ located at $(\pi,\pi)$, the algorithm of
Theorem~\ref{thm:main} is not converging, indicating the absence of
the conical point in that area.

To interpret the results, recall that a quasi-Newton minimization is
searching for the zero of the gradient of $g$ using a numerical
approximation of the Hessian of $g$.  But according to
Theorem~\ref{thm:Hess_disc}, the matrix appearing in
equation~\eqref{iter_equation_simple} is \emph{equal} to the leading
term of the Hessian (or its square root) around the
conical point.  It is therefore natural to expect a faster
convergence.

To give an analogy, consider finding the root of $f(x)=x^2-a$ via the
Newton--Raphson scheme (thus computing $f'$ as done in
Theorem~\ref{thm:main}) or via minimization of $g(x)=f^2(x)$
(thus computing $g''$ in the course of finding the root of $g'$).  Of
course, close to the root, $g'' \approx (f')^2$, so the two schemes
give equivalent rates of convergence, but having an analytical
expression for $f'$ naturally produces better results than performing
a numerical approximation of $g''$.

Theorem~\ref{thm:main}) would thus be beneficial in any situation
where computing two eigenvectors is not significantly more expensive
than sampling the eigenvalues several times.\footnote{In the
  quasi-Newton experiment above, the eigenvalues were computed 5 times
  per iteration step in order to estimate the Hessian} One example of
such circumstances is given by differential operators on metric graphs
\cite{Ber_gcst17}, where the eigenvalues are found by solving the
``secular equation'' of the form
$\det\left(I-S(\sqrt{\lambda})\right)=0$, and, once an eigenvalue is
identified, the corresponding eigenvector of $S(\sqrt{\lambda})$ gives
the (Fourier coefficients of the) eigenvector on the graph.  The
latter operation is inexpensive relative to repeated evaluation of the
determinant necessary for locating the root $\lambda$.

\subsection{Locating points of higher multiplicity}

We can apply a modification of the method to search for points of
higher multiplicity in a family of matrices with sufficient number of
parameters.  For example, for locating a triple eigenvalue of a
5-parameter family $A$ we use
\begin{equation}
  \label{eq:F_triple}
  F(A^\pvec) =
  \begin{pmatrix}
    A^\pvec_{11} - A^\pvec_{22} \\
    A^\pvec_{22} - A^\pvec_{33} \\
    2 A^\pvec_{12} \\
    2 A^\pvec_{13} \\
    2 A^\pvec_{23}    
  \end{pmatrix},
\end{equation}
where $A^\pvec$ is the function $A(\cdot)$ expressed in the eigenbasis
calculated at point $\pvec$; the first three eigenvectors are assumed
to correspond to the consecutive eigenvalues whose point of coalescing
we are seeking.  As before,
$J_{\rvec}(A^{\pvec}) = \Diff_\rvec F(A^{\pvec})$, and a point
$\alpha$ of triple multiplicity is \emph{non-degenerate} if
$\det J_\alpha(A^\alpha) \neq 0$.

\begin{figure}
  \centering
  \includegraphics[width=0.4\textwidth]{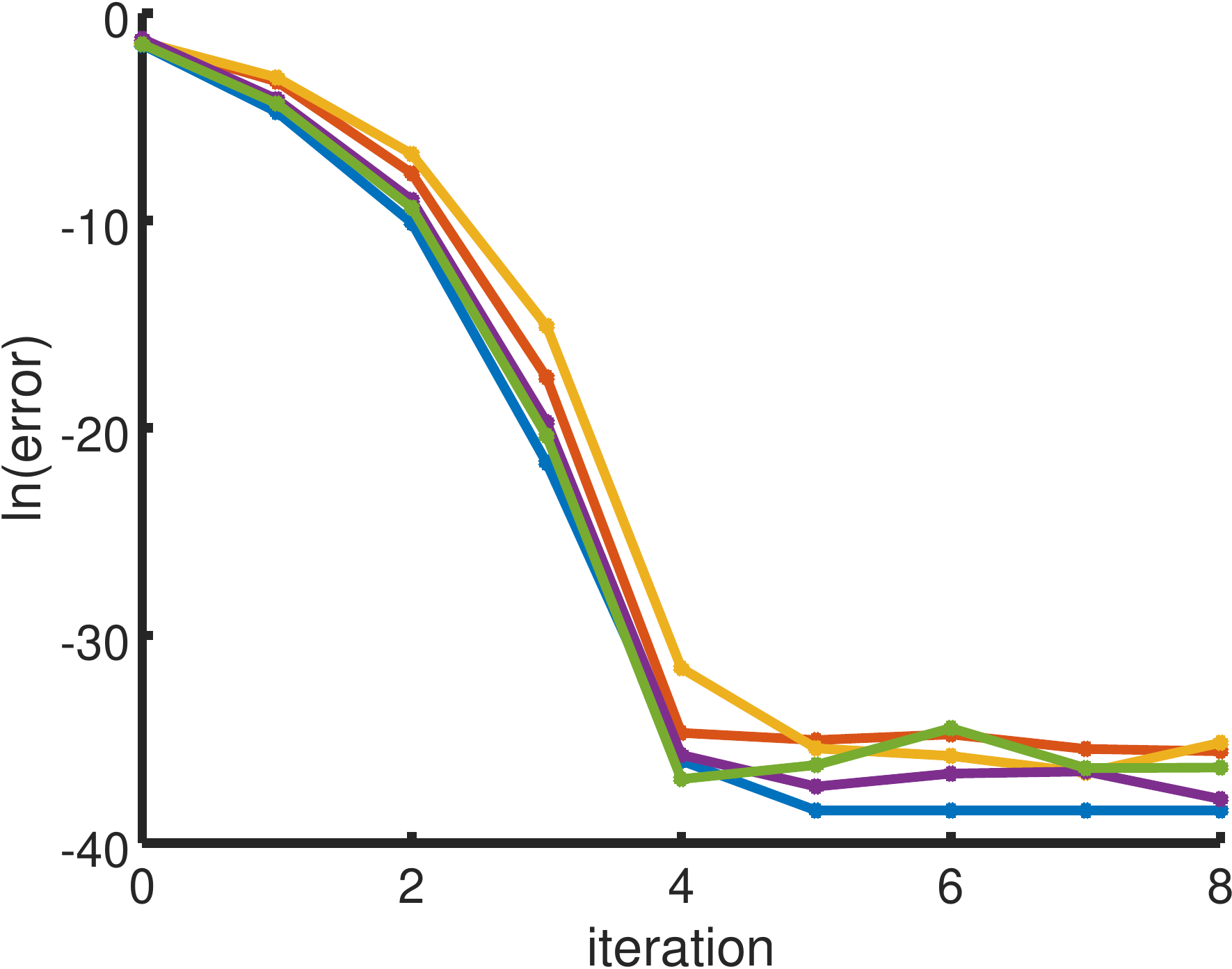}  
  \caption{Logarithm of distance to $\mathbf{0}$ as a function of the
    iteration step for several runs of the algorithm for
    $A(x,y,z,u,v)$ given by equation~\eqref{eq:5_params}. Several
    independent runs are plotted, each beginning at a random point in
    $[-0.2, 0.2]^5$.}
  \label{fig:triple}
\end{figure}

To demonstrate the performance of our method in locating a triple
multiplicity, we consider the function
\begin{equation}
  \label{eq:5_params}
  A =
  \begin{pmatrix}
    1+v+w+x-3y-z & 2x+y+2z & x+xz+y \\
    2x+y+2z & 1+x+yz & 2v-w+z \\
    x+xz+y & 2v-w+z & 1+vw
  \end{pmatrix}
\end{equation}
with triple eigenvalue at $(0,0,0,0,0)$.  The results of several runs
are shown in Figure~\ref{fig:triple}; the convergence is clearly
quadratic until the limit of numerical precision is reached in about 4 steps.

\section*{Acknowledgment}

Work on this project was supported by the National Science Foundation
through grant DMS-1815075 and the Binational US--Israel Science
Foundation grant 2016281 while one of the authors (AP) was an
undergraduate student at Texas A\&M University.  Numerous illuminating
discussions with Igor Zelenko are gratefully acknowledged.  The
authors are particularly grateful to the referees for their deep
reading of the manuscript and several suggestions which resulted in
significant improvement of the presentation.

\bibliographystyle{abbrv}
\bibliography{conical_search,bk_bibl}

\end{document}